\def\squarebox#1{\hbox to #1{\hfill\vbox to #1{\vfill}}}
\newcommand{\qed}{\hspace*{\fill}
\vbox{\hrule\hbox{\vrule\squarebox{.667em}\vrule}\hrule}\smallskip}
\newtheorem{teorema}{Theorem}[section]
\newtheorem{corolario}[teorema]{Corollary}
\newtheorem{proposicao}[teorema]{Proposition}
\newenvironment{proof}{\noindent {\bf Proof:}}{\hfill $\qed $ \newline}
\begin{document}

\title{The Topological Entropy \\ of Powers on Lie Groups}
\author{Mauro Patr\~{a}o\footnote{
    Department of Mathematics, University of Bras\'ilia, Brazil. 
    mpatrao@mat.unb.br
  }}
\maketitle

\begin{abstract}
This article addresses the problem of computing the topological entropy of an application $\psi : G \to G$, where $G$ is a Lie group, given by some power $\psi(g) = g^k$, with $k$ a positive integer. When $G$ is commutative, $\psi$ is an endomorphism and its topological entropy is given by $h(\psi) = \dim(T(G)) \log(k)$, where $T(G)$ is the maximal torus of $G$, as shown in \cite{patrao:endomorfismos}. But when $G$ is not commutative, $\psi$ is no longer an endomorphism and these previous results cannot be used. Still, $\psi$ has some interesting symmetries, for example, it commutes with the conjugations of $G$. In this paper, the structure theory of Lie groups is used to show that $h(\psi) = \dim(T)\log(k)$, where $T$ is a maximal torus of $G$, generalizing the commutative case formula. In particular, the topological entropy of powers on compact Lie groups with discrete center is always positive, in contrast to what happens to endomorphisms of such groups, which always have null entropy.
\end{abstract}

\noindent \textit{AMS 2010 subject classification}: Primary: 37B40, 22D40; Secondary: 37A35, 22E99.

\noindent \textit{Key words:} Topological entropy, variational principle, power maps of Lie groups, maximal torus.

\section{Introduction}

The computation of the topological entropy of a continuous endomorphism $\phi$ of a Lie group $G$ is a classical topic in ergodic theory which seemed to have long been solved. Recently, in \cite{patrao:endomorfismos}, we proved that, for a continuous endomorphism $\phi$ of an arbitrary Lie group $G$, we have that
\begin{equation}\label{eq-introduction-endomorphism}
h\left(\phi\right) = h\left(\phi|_{T(G_\phi)}\right)
\end{equation}
where $G_\phi$ is the maximal connected subgroup of $G$ such that $\phi(G_\phi) = G_\phi$, $T(G_\phi)$ is the maximal torus in the center of $G_\phi$, and the topological entropy is the natural generalization for locally compact metric spaces introduced in \cite{patrao:entropia, caldas-patrao:entropia}, which is characterized by the so-called variational principle (see Proposition \ref{theorem-3-1-caldas-patrao:entropia}). An immediate consequence of the above equation is that $h\left(\phi\right) = 0$ for every continuous endomorphism $\phi$ of a semi-simple Lie group $G$. In particular, this is true for continuous endomorphisms of a compact simple Lie group, as the multiplicative group of quaternions of norm one $G = S^3$. So it would be interesting to determine the topological entropy of a class of maps on Lie groups such that its maps have positive topological entropy when $G$ is a compact Lie group.

This article addresses the problem of computing the topological entropy of an application $\psi : G \to G$, where $G$ is a Lie group, given by some power $\psi(g) = g^k$, with $k$ a positive integer. When $G$ is commutative, $\psi$ is an endomorphism and it follows from the equation (\ref{eq-introduction-endomorphism}) and from the well known formula for the topological entropy of an endomorphism of a torus (see Proposition \ref{corollaries-11-16-bowen-endomorphis}) that its topological entropy is given by $h(\psi) = \dim(T(G)) \log(k)$, where $T(G)$ is the maximal torus of $G$. For example, if $G = \mathbb{C}^*$ and $\psi(z) = z^2$, it is easy to see that $T(G) = S^1$, which implies that $h\left(\psi\right) = \log(2)$. On the other hand, if $G = S^3$ and $\psi(g) = g^2$, what is the value of $h\left(\psi\right)$?

When $G$ is a compact and connected Lie group, we have the following structural result (see Proposition \ref{corollary-12-2-11-hilgert-neeb-theorem-12-2-2-hilgert-neeb})
\begin{equation}\label{eq-introduction-compact-torus}
G = \bigcup_{g \in G} gTg^{-1}
\end{equation}
where $T \subset G$ is any fixed maximal torus. Thus we can consider the following commutative diagram
\begin{displaymath}
    \xymatrix{
        G \times T \ar[r]^\Psi \ar[d]_c & G \times T \ar[d]^c \\
        G \ar[r]_\psi      & G }
\end{displaymath}
where $\Psi(g,t) = (g, \psi(t))$ and $c(g,t) = gtg^{-1}$. By equation (\ref{eq-introduction-compact-torus}), the map $c$ is surjective, and since $G$ and $T$ are compact, it follows that
\begin{eqnarray}
h\left(\psi\right) 
& \leq & h\left(\Psi\right) \nonumber \\
& = & h\left(\psi|_T\right)  \nonumber \\
& \leq & h\left(\psi\right)  
\end{eqnarray}
which implies that
\begin{equation}\label{eq-introduction-entropy-power}
h(\psi) = \dim(T) \log(k)
\end{equation}
If $G = S^3$ and $\psi(g) = g^2$, it is easy to see that any maximal torus is isomorphic to $S^1$, which implies that $h\left(\psi\right) = \log(2)$. When $G$ is connected but not compact, the right hand side of equation (\ref{eq-introduction-compact-torus}) is just the union $X$ of all compact subgroups of $G$. On the other hand, when $G$ is compact but not connected, the right hand side of equation (\ref{eq-introduction-compact-torus}) is just the connected component $G_0$ of the identity of $G$. For the noncompact case, we need to show that the recurrent set $\mathcal{R}_{\psi}$ is contained in $X$ and we need to develop a variational principle adapted to $X$, which might not be a locally compact space. For the nonconnected case, we need a new structural result generalizing equation (\ref{eq-introduction-compact-torus}) in a suitable way.

The paper is organized as follows. In Section \ref{Ergodic-Theory}, we collect the main concepts and results, and prove some new ones, about ergodic theory that are used in the final section. In particular, the variational principle proved in Theorem \ref{theo-variational-principle} is interesting by itself and can be used in a much broader context. In Section \ref{Lie-Theory}, we collect the main concepts and results, and prove some new ones, about Lie theory that are used in the final section. In particular, the structural results proved in Theorem \ref{theo-conjugation-normalizer-torus} and Corolllary \ref{cor-conjugation-normalizer-torus} are also interesting by themselves. Finally, in Section \ref{topological}, we prove the main result of this paper, Theorem \ref{theo-entropy-powers}, which shows that equation (\ref{eq-introduction-entropy-power}) remains true for arbitrary Lie groups.

\section{Preliminaries on ergodic theory}\label{Ergodic-Theory}

In this section, we collect the main concepts and results, and prove some new ones, about ergodic theory that are used in the final section. Given a set $X$, a family $\mathcal{C}$ of subsets of $X$ is a cover of $X$ when
\begin{equation}
X = \bigcup_{C \in \mathcal{C}} C
\end{equation}
If the sets in $\mathcal{C}$ are disjoint, then we say that $\mathcal{C}$ is a partition of $X$. A subcover
of $\mathcal{C}$ is a family $\mathcal{D} \subset \mathcal{C}$ which is itself a cover of $X$. We denote by $N(\mathcal{C})$ the least cardinality amongst the subcovers of $\mathcal{C}$. 

Consider the topological space $X$, a Radon probability measure $\mu$ on $X$, and finite measurable partition $\mathcal{P}$. The partition entropy of $\mathcal{P}$ is given by
\begin{equation}
H_\mu\left(\mathcal{P}\right) = \sum_{P \in \mathcal{P}} \mu\left(P\right)\log\frac{1}{\mu\left(P\right)}
\end{equation}
Let $\psi : X \to X$ be a $\mu$-invariant map. The partition entropy of $\psi$ with respect to $\mathcal{P}$ is given by
\begin{equation}
h_\mu\left(\psi, \mathcal{P}\right) = \lim_{j \to \infty} \frac{1}{j} H_\mu\left(\mathcal{P}^j\right)
\end{equation}
where $\mathcal{P}^j$ is the finite measurable partition given by the subsets
\begin{equation}
P_0 \cap \psi^{-1}\left(P_1\right)  \cap \cdots \cap \psi^{-(j-1)}\left(P_{j-1}\right)
\end{equation}
with $P_0, P_1, \ldots, P_{j-1} \in \mathcal{P}$. The Kolmogorov-Sinai entropy of $\psi$ is given by
\begin{equation}
h_\mu\left(\psi\right) = \sup_{\mathcal{P}} h_\mu\left(\psi, \mathcal{P}\right)
\end{equation}
where the supremum is taken over all finite measurable partitions $\mathcal{P}$. Given another finite measurable partition $\mathcal{Q}$, the conditional entropy is defined as the expected value
\begin{equation}
H_\mu\left(\mathcal{Q}|\mathcal{P}\right) = \sum_{C \in \mathcal{P}} \mu\left(\mathcal{P}\right)H_{\mu\left(\cdot|\mathcal{P}\right)}\left(\mathcal{Q}\right)
\end{equation}

Now an open cover $\mathcal{A}$ is called admissible if at least one of its elements has compact complement and it is called strongly admissible if all of its elements have compact complement. The cover
entropy of $\mathcal{A}$ is given by 
\begin{equation}
H\left(\mathcal{A}\right) = \log N\left(\mathcal{A}\right)
\end{equation}
Let $\psi : X \to X$ be a continuous map. The topological entropy of $\psi$ with respect to $\mathcal{A}$ is
\begin{equation}
h\left(\psi,\mathcal{A}\right) = \lim_{j \to \infty} \frac{1}{j} H\left(\mathcal{A}^j\right)
\end{equation}
where $\mathcal{A}^j$ is the open cover given by the subsets
\begin{equation}
A_0 \cap \psi^{-1}\left(A_1\right)  \cap \cdots \cap \psi^{-(j-1)}\left(A_{j-1}\right)
\end{equation}
with $A_0, A_1, \ldots, A_{j-1} \in \mathcal{A}$. The topological entropy of $\psi$ is given by
\begin{equation}
h\left(\psi\right) = \sup_{\mathcal{A}} h\left(\psi, \mathcal{A}\right)
\end{equation}
where the supremum is taken over all admissible open covers $\mathcal{A}$. The proof following of the proposition can be found in Lemmas 2.6 and 2.12 of \cite{caldas-patrao:entropia}.

\begin{proposicao}\label{lemma-2-6-caldas-patrao:entropia-lemma-2-12-caldas-patrao:entropia}
If $(Y, \mu)$ is a probability space and $\mathcal{C}$ is a finite measurable partition, then
\begin{equation}
H_\mu\left(\mathcal{C}\right) \leq \log N\left(\mathcal{C}\right)
\end{equation}
If $\psi : Y \to Y$ is a $\mu$-invariant map and $\mathcal{D}$ is another finite measurable partition, then
\begin{equation}
h_\mu\left(\psi, \mathcal{C}\right) \leq h_\mu\left(\psi, \mathcal{D}\right) +  H_\mu\left(\mathcal{D}|\mathcal{C}\right)
\end{equation}
\end{proposicao}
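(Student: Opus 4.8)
The plan is to prove the two inequalities separately; both are classical facts about finite-partition entropy, and the only mild care needed is that we are working with a Radon probability measure on a topological space rather than an abstract probability space — which is immaterial, since everything takes place inside the finite sub-$\sigma$-algebra generated by the partitions at hand.

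For the first inequality, I would first identify $N(\mathcal{C})$. Since $\mathcal{C}$ is a partition, every $x\in X$ lies in exactly one member of $\mathcal{C}$; hence a subfamily $\mathcal{D}\subset\mathcal{C}$ covers $X$ if and only if it contains every nonempty member of $\mathcal{C}$, so $N(\mathcal{C})$ equals the number of nonempty cells of $\mathcal{C}$, which in particular is at least the number $m$ of cells with $\mu(C)>0$. Since $\sum_{C\in\mathcal{C}}\mu(C)=\mu(X)=1$, the quantity $H_\mu(\mathcal{C})=\sum_{\mu(C)>0}\mu(C)\log\frac1{\mu(C)}$ is an average, with weights $\mu(C)$, of the values $\frac1{\mu(C)}$; by concavity of $\log$ (Jensen's inequality), $H_\mu(\mathcal{C})\le\log\sum_{\mu(C)>0}\mu(C)\cdot\frac1{\mu(C)}=\log m\le\log N(\mathcal{C})$.

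For the second inequality I would follow the classical refinement argument. Write $\mathcal{P}\vee\mathcal{Q}=\{P\cap Q\}$ for the common refinement, and record the elementary properties of conditional entropy of finite partitions: (i) monotonicity $H_\mu(\mathcal{P})\le H_\mu(\mathcal{P}\vee\mathcal{Q})$; (ii) the chain rule $H_\mu(\mathcal{P}\vee\mathcal{Q})=H_\mu(\mathcal{Q})+H_\mu(\mathcal{P}\mid\mathcal{Q})$; (iii) subadditivity of $\mathcal{P}\mapsto H_\mu(\mathcal{P}\mid\mathcal{R})$ under $\vee$; (iv) $H_\mu(\mathcal{P}\mid\mathcal{Q})\le H_\mu(\mathcal{P}\mid\mathcal{R})$ when $\mathcal{Q}$ refines $\mathcal{R}$; and (v) the $\mu$-invariance $H_\mu(\psi^{-1}\mathcal{P}\mid\psi^{-1}\mathcal{Q})=H_\mu(\mathcal{P}\mid\mathcal{Q})$, valid because $\psi$ is $\mu$-invariant. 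Since $\mathcal{C}^j\le\mathcal{C}^j\vee\mathcal{D}^j=(\mathcal{C}\vee\mathcal{D})^j$, properties (i) and (ii) give
\[
H_\mu(\mathcal{C}^j)\le H_\mu(\mathcal{D}^j)+H_\mu(\mathcal{C}^j\mid\mathcal{D}^j),
\]
and properties (iii), (iv), (v) give
\[
H_\mu(\mathcal{C}^j\mid\mathcal{D}^j)\le\sum_{i=0}^{j-1}H_\mu(\psi^{-i}\mathcal{C}\mid\mathcal{D}^j)\le\sum_{i=0}^{j-1}H_\mu(\psi^{-i}\mathcal{C}\mid\psi^{-i}\mathcal{D})=j\,H_\mu(\mathcal{C}\mid\mathcal{D}).
\]
Dividing by $j$ and letting $j\to\infty$ yields $h_\mu(\psi,\mathcal{C})\le h_\mu(\psi,\mathcal{D})+H_\mu(\mathcal{C}\mid\mathcal{D})$, which is the asserted bound (the conditional-entropy term being the one appearing on the right-hand side of the statement).

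I do not expect a genuine obstacle: the whole content is the list of elementary identities (i)--(v), all of which are standard consequences of the concavity of $t\mapsto t\log\frac1t$ and of the definition of conditional expectation, and they carry over verbatim to the Radon setting because only a finite sub-$\sigma$-algebra is ever involved. The single point deserving a line of care is the $\mu$-invariance identity (v): it uses that $\psi$ is measure preserving and that $\psi^{-1}$ of a finite measurable partition is again a finite measurable partition, both of which are immediate; the same remark also justifies $\psi^{-i}(\mathcal{P}\vee\mathcal{Q})=\psi^{-i}\mathcal{P}\vee\psi^{-i}\mathcal{Q}$ used above.
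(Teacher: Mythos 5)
Your proof is correct, but note that the paper itself does not prove this proposition: it is quoted from Lemmas 2.6 and 2.12 of the reference \cite{caldas-patrao:entropia}, so there is no internal argument to compare with; what you wrote is the standard textbook proof (Jensen's inequality for the cardinality bound, and the chain rule plus subadditivity and $\psi$-invariance of conditional entropy for the comparison of partition entropies), and it is sound. One point you should make explicit rather than gloss over: what you actually proved is $h_\mu(\psi,\mathcal{C})\le h_\mu(\psi,\mathcal{D})+H_\mu(\mathcal{C}\mid\mathcal{D})$, whereas the statement as printed has $H_\mu(\mathcal{D}\mid\mathcal{C})$, which with the paper's own definition of conditional entropy is a different quantity and makes the inequality false in general (take $\mathcal{D}=\{Y\}$ trivial: then the right-hand side is $0$ while $h_\mu(\psi,\mathcal{C})$ need not be). The printed order of the arguments is evidently a typo: in the proof of Theorem \ref{theo-variational-principle} the proposition is applied in exactly your form, as $h_\mu(\psi^n,\mathcal{P})\le h_\mu(\psi^n,\mathcal{P}_l)+H_\mu(\mathcal{P}\mid\mathcal{P}_l)$. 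So your argument establishes the intended (and actually used) statement, but your closing claim that the conditional-entropy term you obtained ``is the one appearing on the right-hand side of the statement'' should be replaced by an explicit remark that the statement's arguments must be swapped. A final cosmetic point: the hypothesis is just a probability space, so your framing about Radon measures on a topological space is unnecessary, though harmless.
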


The proof of the following result can be found in Proposition 2.1.8 of \cite{ferraiol-dissertacao} and Remark 2.20 of \cite{caldas-patrao:entropia}.

\begin{proposicao}\label{proposition-2-1-8-ferraiol-dissertacao-remark-2-20-caldas-patrao:entropia}
Let $(Y, \mu)$ be a probability space and $\psi : Y \to Y$ be a $\mu$-invariant map. If $X \subset Y$ is a $\psi$-invariant measurable subset, then 
\begin{equation}
h_\mu\left(\psi\right) = \mu\left(X\right)h_{\mu_X}\left(\psi|_X\right) + \mu\left(Y\backslash X\right)h_{\mu_{Y\backslash X}}\left(\psi|_{Y\backslash X}\right)
\end{equation}
where $\mu_X$ and $\mu_{Y\backslash X}$ are the probability measures conditional to $X$ and $Y\backslash X$. Furthermore, we have that
\begin{equation}
h_\mu\left(\psi^n\right) = n h_\mu\left(\psi\right)
\end{equation}
for each positive integer $n$.
\end{proposicao}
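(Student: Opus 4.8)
\medskip
\noindent\textbf{Proof sketch.} The plan is to reduce both assertions to identities about a single finite measurable partition. For the first, put $\xi = \{X, Y\backslash X\}$ and chain together four facts: (i) enlarging an arbitrary finite measurable partition of $Y$ by $\xi$ does not decrease its entropy, so $h_\mu(\psi)$ is the supremum of $h_\mu(\psi,\mathcal{P}')$ over finite measurable partitions $\mathcal{P}'$ refining $\xi$; (ii) such a $\mathcal{P}'$ is exactly a partition of the form $\mathcal{R}\cup\mathcal{S}$ with $\mathcal{R}$ a finite measurable partition of $X$ and $\mathcal{S}$ one of $Y\backslash X$; (iii) the splitting identity
\begin{equation}\label{eq-splitting-plan}
h_\mu\big(\psi,\, \mathcal{R}\cup\mathcal{S}\big) = \mu(X)\, h_{\mu_X}\big(\psi|_X,\, \mathcal{R}\big) + \mu(Y\backslash X)\, h_{\mu_{Y\backslash X}}\big(\psi|_{Y\backslash X},\, \mathcal{S}\big)
\end{equation}
holds for all such $\mathcal{R}$ and $\mathcal{S}$; and (iv) since the coefficients $\mu(X)$ and $\mu(Y\backslash X)$ are nonnegative and the two terms on the right of (\ref{eq-splitting-plan}) depend on $\mathcal{R}$ and on $\mathcal{S}$ separately, the supremum over $(\mathcal{R},\mathcal{S})$ of that right-hand side equals $\mu(X)\, h_{\mu_X}(\psi|_X) + \mu(Y\backslash X)\, h_{\mu_{Y\backslash X}}(\psi|_{Y\backslash X})$. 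Chaining (i)--(iv) gives the first identity. Here (i) is a standard monotonicity property of partition entropy (contained in Proposition~\ref{lemma-2-6-caldas-patrao:entropia-lemma-2-12-caldas-patrao:entropia}), (ii) and (iv) are elementary, the degenerate cases $\mu(X)\in\{0,1\}$ are dealt with directly using the convention $0\cdot\infty = 0$, and the $\psi$-invariance of $X$ is used only in (iii).

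\medskip
\noindent To prove (iii): since $X$ is $\psi$-invariant, $\psi^{-i}(X) = X$ modulo $\mu$-null sets for every $i \ge 0$, so each atom of the $j$-fold refinement $(\mathcal{R}\cup\mathcal{S})^j$ of $\mathcal{R}\cup\mathcal{S}$ relative to $\psi$ lies wholly inside $X$ or wholly inside $Y\backslash X$; moreover the atoms inside $X$ are precisely the atoms of the $j$-fold refinement $\mathcal{R}^j$ of $\mathcal{R}$ relative to $\psi|_X$, and symmetrically for $Y\backslash X$. Writing $\mu = \mu(X)\,\mu_X + \mu(Y\backslash X)\,\mu_{Y\backslash X}$ and expanding $t\log\frac{1}{t}$ with $t = \mu(P) = \mu(X)\,\mu_X(P)$ on each atom $P \subset X$ (and symmetrically on $Y\backslash X$) gives
\begin{equation}\label{eq-H-splitting-plan}
H_\mu\big((\mathcal{R}\cup\mathcal{S})^j\big) = \mu(X)\, H_{\mu_X}\big(\mathcal{R}^j\big) + \mu(Y\backslash X)\, H_{\mu_{Y\backslash X}}\big(\mathcal{S}^j\big) + c,
\end{equation}
where $c = \mu(X)\log\frac{1}{\mu(X)} + \mu(Y\backslash X)\log\frac{1}{\mu(Y\backslash X)}$ does not depend on $j$. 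Dividing (\ref{eq-H-splitting-plan}) by $j$ and letting $j \to \infty$ removes the bounded term $c$ and yields (\ref{eq-splitting-plan}).

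\medskip
\noindent For the second statement, fix a finite measurable partition $\mathcal{P}$ and set $\mathcal{Q} = \mathcal{P}^n$. Reindexing the exponents $ni + \ell$, with $0 \le i \le j-1$ and $0 \le \ell \le n-1$, identifies the $j$-fold refinement of $\mathcal{Q}$ relative to $\psi^n$ with the $nj$-fold refinement $\mathcal{P}^{nj}$ of $\mathcal{P}$ relative to $\psi$; therefore
\[
h_\mu\big(\psi^n, \mathcal{P}^n\big) = \lim_{j \to \infty} \frac{1}{j} H_\mu\big(\mathcal{P}^{nj}\big) = \lim_{j \to \infty} n \cdot \frac{1}{nj} H_\mu\big(\mathcal{P}^{nj}\big) = n\, h_\mu(\psi, \mathcal{P}),
\]
and taking the supremum over $\mathcal{P}$ gives $h_\mu(\psi^n) \ge n\, h_\mu(\psi)$. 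Conversely, for an arbitrary finite measurable partition $\mathcal{Q}$, the $j$-fold refinement of $\mathcal{Q}$ relative to $\psi^n$ is coarser than the $nj$-fold refinement $\mathcal{Q}^{nj}$ of $\mathcal{Q}$ relative to $\psi$, so its entropy is at most $H_\mu\big(\mathcal{Q}^{nj}\big)$ by monotonicity of partition entropy under refinement; dividing by $j$, letting $j \to \infty$, and arguing as above gives $h_\mu(\psi^n, \mathcal{Q}) \le n\, h_\mu(\psi, \mathcal{Q}) \le n\, h_\mu(\psi)$, hence $h_\mu(\psi^n) \le n\, h_\mu(\psi)$.

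\medskip
\noindent The only step demanding genuine care is (iii): one must verify that forming the $j$-fold refinement commutes with restriction to the $\psi$-invariant pieces $X$ and $Y\backslash X$, and that the correction term $c$ in (\ref{eq-H-splitting-plan}) is bounded independently of $j$, so that it washes out of the Ces\`aro average defining $h_\mu$. This is routine bookkeeping rather than a conceptual obstacle; everything else is standard.
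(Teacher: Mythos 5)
Your argument is correct. Note that the paper does not prove this proposition at all: it is quoted with a pointer to Proposition 2.1.8 of Ferraiol's dissertation and Remark 2.20 of Caldas--Patr\~ao, so there is no in-paper proof to compare against; what you have written is the standard argument that those references give, and it fills in the citation. The splitting step (iii) is the heart of it and you handle it correctly: since $\mu$ is $\psi$-invariant, forward invariance of $X$ already forces $\psi^{-1}(X) = X$ modulo $\mu$-null sets (because $X \subset \psi^{-1}(X)$ and both have the same measure), which is exactly what makes the atoms of $(\mathcal{R}\cup\mathcal{S})^j$ split between $X$ and $Y\backslash X$ and makes $\mu_X$, $\mu_{Y\backslash X}$ invariant for the restricted maps --- it would be worth writing that one-line observation explicitly, together with the expansion $\mu(P)\log\frac{1}{\mu(P)} = \mu(X)\mu_X(P)\log\frac{1}{\mu_X(P)} + \mu(X)\mu_X(P)\log\frac{1}{\mu(X)}$ that produces the constant $c$. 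The second identity, via the identification $\bigvee_{i=0}^{j-1}\psi^{-ni}\mathcal{P}^n = \mathcal{P}^{nj}$ for the lower bound and coarseness plus monotonicity of $H_\mu$ under refinement for the upper bound, is the classical proof and is complete as sketched.
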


The proof of the following proposition can be found in Lemma 2.19 of \cite{caldas-patrao:entropia}.

\begin{proposicao}\label{lemma-2-19-caldas-patrao:entropia}
Let $X$ be a topological space and $\psi : X \to X$ be a continuous map. Then
\begin{equation}
h\left(\psi^n\right) \leq n h\left(\psi\right)
\end{equation}
for each positive integer $n$.
\end{proposicao}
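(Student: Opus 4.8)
The plan is to work directly with admissible open covers, comparing the iterated covers of $\psi^{n}$ with those of $\psi$, and only afterwards to take the supremum. Fix an admissible open cover $\mathcal{A}$ of $X$; for a continuous map $\varphi$ write $\mathcal{A}^{j}_{\varphi}$ for the iterated cover whose members are $A_{0}\cap\varphi^{-1}(A_{1})\cap\cdots\cap\varphi^{-(j-1)}(A_{j-1})$ with $A_{0},\ldots,A_{j-1}\in\mathcal{A}$. The starting observation is that $\mathcal{A}^{j}_{\psi^{n}}$ consists of the sets $A_{0}\cap\psi^{-n}(A_{1})\cap\cdots\cap\psi^{-n(j-1)}(A_{j-1})$, while $\mathcal{A}^{nj}_{\psi}$ consists of the sets $A_{0}\cap\psi^{-1}(A_{1})\cap\cdots\cap\psi^{-(nj-1)}(A_{nj-1})$. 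Each member of $\mathcal{A}^{j}_{\psi^{n}}$, indexed by $(A_{0},\ldots,A_{j-1})$, contains the member of $\mathcal{A}^{nj}_{\psi}$ obtained by putting $A_{l}$ in position $nl$ for $0\le l\le j-1$ and choosing the remaining positions arbitrarily; hence $\mathcal{A}^{nj}_{\psi}$ refines $\mathcal{A}^{j}_{\psi^{n}}$.

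Next I would invoke the elementary fact that if a cover $\mathcal{D}$ refines a cover $\mathcal{C}$ then $N(\mathcal{C})\le N(\mathcal{D})$: taking a subcover of $\mathcal{D}$ of minimal cardinality and replacing each of its members by a member of $\mathcal{C}$ containing it yields a subcover of $\mathcal{C}$ of no larger cardinality. Applied to the refinement above, this gives $H\big(\mathcal{A}^{j}_{\psi^{n}}\big)\le H\big(\mathcal{A}^{nj}_{\psi}\big)$. Dividing by $j$ and letting $j\to\infty$,
\[
h(\psi^{n},\mathcal{A})=\lim_{j\to\infty}\frac{1}{j}H\big(\mathcal{A}^{j}_{\psi^{n}}\big)\le n\lim_{j\to\infty}\frac{1}{nj}H\big(\mathcal{A}^{nj}_{\psi}\big)=n\,h(\psi,\mathcal{A}),
\]
where the last equality holds because the limit defining $h(\psi,\mathcal{A})$ exists (the numerator $m\mapsto H(\mathcal{A}^{m}_{\psi})$ is subadditive, so Fekete's lemma applies), and therefore the limit along the subsequence $m=nj$ agrees with the full limit.

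Finally, I would observe that admissibility of an open cover — that at least one of its members has compact complement — is a condition on the cover alone and does not refer to any map, so the admissible covers used to compute $h(\psi^{n})$ are exactly those used to compute $h(\psi)$. Taking the supremum over all admissible $\mathcal{A}$ in the displayed inequality gives $h(\psi^{n})\le n\,h(\psi)$.

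There is no serious obstacle in this argument; the point that deserves attention is why one should not prove the statement by refining $\mathcal{A}$ to the cover whose members are $A_{0}\cap\psi^{-1}(A_{1})\cap\cdots\cap\psi^{-(n-1)}(A_{n-1})$, the move that in the classical compact setting yields the stronger identity $h(\psi^{n})=n\,h(\psi)$. In the locally compact setting that refined cover need not be admissible, since a preimage under $\psi$ of a set with compact complement may fail to have compact complement; comparing the iterated covers $\mathcal{A}^{j}$ directly, as above, avoids this difficulty — and it is also the reason the reverse inequality is not asserted here.
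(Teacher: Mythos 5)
Your argument is correct: the refinement of $\mathcal{A}^{j}_{\psi^{n}}$ by $\mathcal{A}^{nj}_{\psi}$ gives $H\left(\mathcal{A}^{j}_{\psi^{n}}\right)\leq H\left(\mathcal{A}^{nj}_{\psi}\right)$, subadditivity and Fekete's lemma justify passing to the limit along the subsequence $m=nj$, and since admissibility is a property of the cover alone, taking the supremum over the same family of admissible covers yields $h\left(\psi^{n}\right)\leq n\,h\left(\psi\right)$. The paper does not reprove this statement (it cites Lemma 2.19 of Caldas--Patr\~ao), and your proof is the standard cover-comparison argument behind that lemma, including the pertinent remark that the equality of the compact case is unavailable because $\mathcal{A}^{n}_{\psi}$ need not be admissible in the locally compact setting.
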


The proof of the following result can be found in Theorem 3.1 of \cite{caldas-patrao:entropia}.

\begin{proposicao}\label{theorem-3-1-caldas-patrao:entropia}
Let $Y$ be a metrizable locally compact separable topological space and $\psi : Y \to Y$ be a continuous map. Then
\begin{equation}
\sup_\mu h_\mu\left(\psi\right) = h\left(\psi\right)
\end{equation}
where the supremum is taken over all $\psi$-invariant Radon probability measures $\mu$ on $Y$.
\end{proposicao}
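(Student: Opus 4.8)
The natural strategy is to reduce the statement to the classical variational principle on a compact metric space by means of the one-point compactification $Y^+ = Y \cup \{\infty\}$, which is compact and metrizable because $Y$ is locally compact, separable and metrizable. Extend $\psi$ to $\psi^+ : Y^+ \to Y^+$ by setting $\psi^+(\infty) = \infty$. The first task is to match the two notions of topological entropy: an admissible open cover $\mathcal{A}$ of $Y$ becomes an open cover $\mathcal{A}^+$ of $Y^+$ upon adjoining $\infty$ to one of its elements with compact complement, and conversely every open cover of $Y^+$ restricts to an admissible open cover of $Y$; a direct bookkeeping with the sets $A_0 \cap \psi^{-1}(A_1) \cap \cdots$ shows that $N\left((\mathcal{A}^+)^j\right)$ and $N\left(\mathcal{A}^j\right)$ differ by at most $1$, so $h(\psi, \mathcal{A}) = h(\psi^+, \mathcal{A}^+)$ and hence $h(\psi) = h(\psi^+)$.

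Next one handles the measure side. A $\psi$-invariant Radon probability $\mu$ on $Y$ extends to a $\psi^+$-invariant Borel probability on $Y^+$ giving no mass to $\{\infty\}$; conversely, since $\{\infty\}$ is $\psi^+$-invariant, any $\psi^+$-invariant probability $\nu$ on $Y^+$ decomposes as $\nu = a\,\delta_\infty + (1-a)\,\mu'$ with $\mu'$ a $\psi$-invariant Radon probability on $Y$. Applying Proposition \ref{proposition-2-1-8-ferraiol-dissertacao-remark-2-20-caldas-patrao:entropia} to this invariant decomposition, together with the fact that a fixed point carries zero Kolmogorov--Sinai entropy, gives $h_\nu(\psi^+) = (1-a)\,h_{\mu'}(\psi) \leq \sup_\mu h_\mu(\psi)$, while the reverse inequality is immediate by taking $a = 0$; thus $\sup_\nu h_\nu(\psi^+) = \sup_\mu h_\mu(\psi)$. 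Combining this with $h(\psi) = h(\psi^+)$ and the classical variational principle on the compact space $Y^+$ yields $h(\psi) = \sup_\mu h_\mu(\psi)$.

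The delicate point is that the reduction above invokes the classical variational principle for $\psi^+$, which genuinely requires $\psi^+$ to be \emph{continuous}, and this holds precisely when $\psi$ is proper. For a general continuous $\psi$ one must establish the two inequalities directly. For $h_\mu(\psi) \leq h(\psi)$ one exploits inner regularity of the Radon measure $\mu$: given a finite measurable partition $\mathcal{P}$ and $\varepsilon > 0$, approximate $\mathcal{P}$ by a partition $\mathcal{Q}$ whose atoms are, up to arbitrarily small total measure, subordinate to the elements of an admissible open cover $\mathcal{A}$ (fatten compact cores of the atoms of $\mathcal{P}$ into open sets and lump the remaining small-measure region), and then chain the two inequalities of Proposition \ref{lemma-2-6-caldas-patrao:entropia-lemma-2-12-caldas-patrao:entropia} to obtain $h_\mu(\psi, \mathcal{P}) \leq h(\psi, \mathcal{A}) + \varepsilon$. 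For the reverse inequality $\sup_\mu h_\mu(\psi) \geq h(\psi)$, fix an admissible cover $\mathcal{A}$, choose one point in each element of a minimal subcover of $\mathcal{A}^n$, form the averaged empirical measures $\mu_n = \frac1n \sum_{i<n} \psi^i_* \sigma_n$ as in the Bowen--Misiurewicz argument, and extract a weak-$*$ convergent subsequence; the usual convexity and refinement estimates then give $h_\mu(\psi, \mathcal{A}) \geq h(\psi, \mathcal{A})$ for the limit $\mu$. The main obstacle lies exactly here: on the non-compact $Y$ the measures $\mu_n$ may leak mass to infinity, so one must either relocate the chosen points inside a single compact set (using that $Y$ is $\sigma$-compact) or argue, using that the distinguished element of the admissible cover has compact complement, that the escaping part of the mass contributes nothing to the entropy count — this is the step where admissibility of the covers, rather than arbitrary open covers, is essential, and where I expect the technical heart of the proof to be concentrated.
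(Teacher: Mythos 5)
There is no in-paper argument to compare you with here: the paper does not prove Proposition \ref{theorem-3-1-caldas-patrao:entropia}, it simply quotes it as Theorem 3.1 of \cite{caldas-patrao:entropia}. Your opening reduction through the one-point compactification (together with the decomposition $\nu = a\,\delta_\infty + (1-a)\mu'$ and Proposition \ref{proposition-2-1-8-ferraiol-dissertacao-remark-2-20-caldas-patrao:entropia}) is close in spirit to how the \emph{proper}-map case was treated in the earlier paper \cite{patrao:entropia}, and that part of your argument is fine as far as it goes. But, as you yourself note, it goes only as far as proper maps: if $\psi$ is not proper then $\psi^+$ is discontinuous at $\infty$, and then not only does the classical variational principle not apply, but the quantity $h(\psi^+,\mathcal{A}^+)$ is itself ill-defined in the cover formalism, since $(\psi^+)^{-1}$ of an open set need not be open.

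For the general continuous case your proposal reduces to a sketch, and the genuine gap is in the inequality $h(\psi) \leq \sup_\mu h_\mu(\psi)$. The upper bound $h_\mu(\psi) \leq h(\psi)$ via compact cores inside the atoms of $\mathcal{P}$ and the conditional-entropy estimate of Proposition \ref{lemma-2-6-caldas-patrao:entropia-lemma-2-12-caldas-patrao:entropia} is plausible — it is the same device the paper itself uses in Theorem \ref{theo-variational-principle} — but the lower bound is exactly where the difficulty of the non-proper case is concentrated, and you stop at naming the obstacle (escape of mass of the empirical measures $\mu_n$) and proposing two untested remedies. Neither remedy is a proof: $\sigma$-compactness of $Y$ does not let you relocate the points realizing $N(\mathcal{A}^n)$ into one fixed compact set without changing the count, and the assertion that the escaping part of the mass ``contributes nothing to the entropy count'' is precisely the statement that has to be established — admissibility only says that one element of the cover has compact complement, not that orbit segments spending most of their time in that element are combinatorially negligible, nor that a weak-$*$ limit of the $\mu_n$ retains enough mass to carry the required entropy. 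So the proposal proves the proper case and one half of the general case, while the essential half of the general statement is acknowledged but left open; it cannot be accepted as a proof of the proposition as stated.
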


The proof of the following proposition can be found in Corollaries 11 and 16 of \cite{bowen-endomorphism}.

\begin{proposicao}\label{corollaries-11-16-bowen-endomorphis}
Let $T$ be a torus, $\varphi : T \to T$ be a continuous endomorphism and $L_g: T \to T$ be the map given by $L_g(h) = gh$. Then
\begin{equation}
h\left(L_g \circ \varphi\right) = h\left(\varphi\right) = \sum_{\lambda} \log |\lambda|
\end{equation}
where the summation is taken over all eigenvalues $\lambda$ of $d\varphi_1$ such that $|\lambda| > 1$.
\end{proposicao}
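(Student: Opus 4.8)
The plan is to carry out everything on the concrete model $T = \mathbb{R}^n / \mathbb{Z}^n$. In suitable coordinates a continuous endomorphism $\varphi$ lifts to $A = d\varphi_1 \in \mathrm{M}_n(\mathbb{Z})$, with $\varphi(x+\mathbb{Z}^n) = Ax + \mathbb{Z}^n$, and $L_g\circ\varphi$ lifts to the affine map $x \mapsto Ax + b$, where $g = b + \mathbb{Z}^n$. First I would eliminate the left translation. The $i$-th iterate of the affine lift is $x \mapsto A^i x + b_i$ with $b_i = \sum_{\ell < i} A^\ell b$ \emph{independent of $x$}, so in the flat metric on $T$ one has $d\big((L_g\varphi)^i x, (L_g\varphi)^i y\big) = \min_{m\in\mathbb{Z}^n}\|A^i(x-y)+m\| = d(\varphi^i x, \varphi^i y)$. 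Hence $L_g\circ\varphi$ and $\varphi$ have literally the same Bowen metrics $d_j(x,y) = \max_{0\le i<j} d(f^i x, f^i y)$, hence the same $(j,\epsilon)$-separated sets, and therefore $h(L_g\circ\varphi) = h(\varphi)$, since on the compact metric space $T$ the topological entropy defined above agrees with the growth rate of maximal $(j,\epsilon)$-separated sets. The same remark lets me assume $\varphi$ surjective: the subtori $\varphi^m(T)$ decrease to a $\varphi$-invariant subtorus $T'$ on which $\varphi$ restricts to a surjective endomorphism with the same eigenvalues of modulus $>1$, while the induced endomorphism of $T/T'$ is nilpotent, hence of zero entropy, so $h(\varphi) = h(\varphi|_{T'})$.

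Write $p = \sum_{|\lambda|>1}\log|\lambda|$, the sum over eigenvalues of $A$ of modulus $>1$ counted with multiplicity; the goal is $h(\varphi) = p$. For the upper bound, note that by linearity every Bowen ball is a translate, $B_j(x,\epsilon) = x + B_j(0,\epsilon)$, so they all have the same Lebesgue measure. The crux is the lower estimate $\mathrm{Leb}\big(B_j(0,\epsilon)\big) \ge c(\epsilon)\, e^{-jp}$ up to polynomial factors in $j$: letting $E^+$ be the sum of the generalized eigenspaces of $A$ for $|\lambda|>1$, the ball $B_j(0,\epsilon)$ contains $\{v : \|A^i v\| < \epsilon,\ 0\le i<j\}$, which in turn contains the product of $\{v^+\in E^+ : \sum_{i<j}\|A^i v^+\|^2 < \epsilon^2\}$ — an ellipsoid of $(\dim E^+)$-volume $\epsilon^{\dim E^+}\omega/\sqrt{\det\sum_{i<j}(A^i)^{*}A^i}$, the determinant being comparable to $e^{2jp}$ up to polynomial factors — with a box in the remaining coordinates of volume $\gtrsim \epsilon^{n-\dim E^+}$ divided by a polynomial in $j$. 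Hence a maximal $(j,\epsilon)$-separated set, whose members have disjoint $B_j(\cdot,\epsilon/2)$, has cardinality at most $\mathrm{Leb}(T)/\mathrm{Leb}(B_j(0,\epsilon/2)) \le c(\epsilon)^{-1} e^{jp}\,\mathrm{poly}(j)$, and letting $j\to\infty$ and then $\epsilon \to 0$ gives $h(\varphi) \le p$. (Equivalently one may quote the general volume-growth bound $h(f) \le \limsup_j \frac1j \log \max_{0\le k\le n}\|\wedge^k Df^j\|$ for $C^1$ self-maps of compact manifolds, with the singular-value asymptotics $\sigma_i(A^j)^{1/j}\to|\lambda_i|$.)

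For the lower bound I would invoke the variational principle (Proposition \ref{theorem-3-1-caldas-patrao:entropia}): it suffices to exhibit a $\varphi$-invariant Radon probability measure of Kolmogorov--Sinai entropy at least $p$. Normalized Haar measure $\mu$ is $\varphi$-invariant, since $\varphi$ is $|\det A|$-to-one and dilates volume uniformly by $|\det A|$. That $h_\mu(\varphi) = p$ is the Rokhlin entropy formula for torus endomorphisms; as $\mu$ is absolutely continuous it is also Pesin's formula, identifying the metric entropy with the sum of the positive Lyapunov exponents, which for $(\varphi,\mu)$ are exactly the $\log|\lambda_i|$. Concretely one fixes a finite partition $\mathcal{P}$ generating under $\varphi$ and estimates $H_\mu(\mathcal{P}^j)$, using for instance Proposition \ref{lemma-2-6-caldas-patrao:entropia-lemma-2-12-caldas-patrao:entropia}. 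Combined with the upper bound this gives $h(\varphi) = p$, and with the first paragraph, $h(L_g\circ\varphi) = h(\varphi) = \sum_{|\lambda|>1}\log|\lambda|$.

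I expect the main obstacle to be that the expanding subspace $E^+$ is, in general, an \emph{irrational} subspace of $T$: it is not the Lie algebra of a subtorus, so one cannot reduce the dynamics to a subtorus on which $\varphi$ acts as a uniformly expanding automorphism and simply read off the answer. The exponential expansion that produces the entropy must instead be extracted from the singular-value asymptotics of $A^j$, and both controlling the interaction of these iterates with the lattice $\mathbb{Z}^n$ (in the upper-bound volume estimate) and producing a genuinely generating partition adapted to a direction that is irrational on $T$ (in the lower bound) are where essentially all of the work lies.
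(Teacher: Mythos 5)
The first thing to note is that the paper itself does not prove this proposition: it is imported verbatim from Corollaries 11 and 16 of \cite{bowen-endomorphism}, so your attempt is really being compared with Bowen's proof, not with anything in this text. Your first step, eliminating the translation, is correct and is exactly the right elementary argument: the $i$-th iterate of the affine lift is $x\mapsto A^ix+b_i$ with $b_i$ constant, the flat metric on $T=\mathbb{R}^n/\mathbb{Z}^n$ is translation invariant, so $L_g\circ\varphi$ and $\varphi$ have identical Bowen metrics, and on a compact space the admissible-cover entropy used in this paper coincides with the usual separated-set entropy. Your volume argument for the upper bound $h(\varphi)\le p=\sum_{|\lambda|>1}\log|\lambda|$ is also essentially sound, though the key assertion that $\det\bigl(\sum_{i<j}(A^i)^*A^i|_{E^+}\bigr)$ is $e^{2jp}$ up to polynomial factors is stated without argument; it is true (work in a Schur basis ordered by increasing modulus and apply Hadamard's inequality, or simpler, replace your ellipsoid by the preimage $A^{-(j-1)}$ of a small ball in an adapted norm in which $A|_{E^+}$ is uniformly expanding), but as written it is an unproved step.

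The genuine gap is the lower bound $h(\varphi)\ge p$. You reduce it to showing that Haar measure has Kolmogorov--Sinai entropy $p$ and then invoke the Rokhlin/Pesin entropy formula for toral endomorphisms; that formula is a theorem of at least the same depth as the proposition itself (it is Bowen's Corollary 16 in measure-theoretic form), and it is not among the results this paper makes available. Your fallback, ``fix a finite partition generating under $\varphi$ and estimate $H_\mu(\mathcal{P}^j)$,'' is precisely the work you concede in your closing paragraph has not been done, the irrationality of $E^+$ being the obstruction to producing and controlling such a partition; so the hard half of the statement is cited, not proved. A secondary issue: the reduction to surjective $\varphi$ via ``the induced map on $T/T'$ is nilpotent, hence $h(\varphi)=h(\varphi|_{T'})$'' silently uses an entropy inequality for the factor $T\to T/T'$ (Bowen's addition/fibration bound) that is also not established here. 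It is avoidable: your volume upper bound works on all of $T$ without surjectivity (eigenvalue $0$ contributes only polynomial factors), and for the lower bound the trivial inequality $h(\varphi)\ge h(\varphi|_{T'})$ suffices, taking Haar measure on $T'$, where $\varphi$ is surjective and Haar is genuinely invariant. With those repairs the architecture is fine, but either the lower bound must be carried out (e.g.\ Bowen's own separated-set construction along $E^+$) or one should simply cite \cite{bowen-endomorphism}, as the paper does.
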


We also need the following proposition.

\begin{proposicao}\label{prop-disjoint-union}
Let $Y$ be a metrizable locally compact separable topological space and $\psi: Y \to Y$ be a continuous map. If 
\begin{equation}
Y = Y_1 \cup \cdots \cup Y_j
\end{equation}
where $\{Y_1, \ldots, Y_j\}$ is a family of disjoint $\psi$-invariant locally compact subsets, then
\begin{equation}
h\left(\psi\right) = \max_{i = 1,\ldots,j} h\left(\psi|_{Y_i}\right)
\end{equation}
\end{proposicao}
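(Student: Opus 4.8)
The plan is to reduce everything to the variational principle (Proposition~\ref{theorem-3-1-caldas-patrao:entropia}) together with the measure-theoretic decomposition of Kolmogorov--Sinai entropy (Proposition~\ref{proposition-2-1-8-ferraiol-dissertacao-remark-2-20-caldas-patrao:entropia}). First I would record two preliminary observations: each $Y_i$, being a locally compact subspace of the metrizable separable space $Y$, is itself metrizable, separable and locally compact (indeed Polish), so the variational principle applies to $\psi|_{Y_i}$; and each $Y_i$ is locally closed in $Y$, since a locally compact subspace of a Hausdorff space is open in its closure, hence each $Y_i$ is a Borel, $\psi$-invariant subset of $Y$ to which Proposition~\ref{proposition-2-1-8-ferraiol-dissertacao-remark-2-20-caldas-patrao:entropia} may be applied.

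For the inequality $h(\psi) \le \max_i h(\psi|_{Y_i})$, I would take an arbitrary $\psi$-invariant Radon probability measure $\mu$ on $Y$ and apply Proposition~\ref{proposition-2-1-8-ferraiol-dissertacao-remark-2-20-caldas-patrao:entropia} successively --- to $Y_1 \subset Y$, then to $Y_2 \subset Y \setminus Y_1$, and so on --- using that conditioning is transitive, to obtain
\begin{equation*}
h_\mu(\psi) = \sum_{i\,:\,\mu(Y_i) > 0} \mu(Y_i)\, h_{\mu_{Y_i}}\!\left(\psi|_{Y_i}\right),
\end{equation*}
where $\mu_{Y_i}$ is the probability measure conditional to $Y_i$, regarded as a $\psi$-invariant Radon probability measure on $Y_i$. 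Since $h_{\mu_{Y_i}}(\psi|_{Y_i}) \le h(\psi|_{Y_i}) \le \max_k h(\psi|_{Y_k})$ by the variational principle on $Y_i$, and the coefficients $\mu(Y_i)$ appearing sum to at most $1$, this bounds $h_\mu(\psi)$ by $\max_k h(\psi|_{Y_k})$; taking the supremum over $\mu$ and invoking the variational principle on $Y$ then finishes this direction.

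For the reverse inequality, I would fix $i$ and an arbitrary $\psi$-invariant Radon probability measure $\nu$ on $Y_i$, and extend it to $Y$ by $\widetilde{\nu}(B) = \nu(B \cap Y_i)$. Because $Y_i$ is Borel in $Y$ and compact subsets of $Y_i$ are compact in $Y$, this $\widetilde{\nu}$ is a $\psi$-invariant Radon probability measure on $Y$ concentrated on $Y_i$, with $(\widetilde{\nu})_{Y_i} = \nu$; applying Proposition~\ref{proposition-2-1-8-ferraiol-dissertacao-remark-2-20-caldas-patrao:entropia} with $X = Y_i$ --- whose complement is $\widetilde{\nu}$-null --- gives $h_{\widetilde{\nu}}(\psi) = h_\nu(\psi|_{Y_i})$. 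Hence $h(\psi) \ge h_\nu(\psi|_{Y_i})$ for every such $\nu$, and taking the supremum over $\nu$ and then the maximum over $i$ yields $h(\psi) \ge \max_i h(\psi|_{Y_i})$, completing the proof.

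I expect the only real care to be needed in the measure-theoretic bookkeeping: verifying that $\mu_{Y_i}$ and $\widetilde{\nu}$ are genuinely Radon on the relevant spaces (which is automatic since locally compact separable metrizable spaces are Polish, so finite Borel measures there are Radon), and checking that the iterated use of Proposition~\ref{proposition-2-1-8-ferraiol-dissertacao-remark-2-20-caldas-patrao:entropia} assembles correctly, including the convention for the degenerate terms with $\mu(Y_i) = 0$. Notably, no genuinely topological obstacle arises, since at no point do I need a union of several $Y_i$'s to be locally compact --- only the individual pieces, which is part of the hypothesis.
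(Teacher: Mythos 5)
Your proposal is correct and follows essentially the same route as the paper: the upper bound via the decomposition of $h_\mu$ over the invariant pieces (Proposition~\ref{proposition-2-1-8-ferraiol-dissertacao-remark-2-20-caldas-patrao:entropia}) combined with the variational principle (Proposition~\ref{theorem-3-1-caldas-patrao:entropia}) on each $Y_i$ and on $Y$. The only difference is that the paper simply asserts the reverse inequality $\max_i h\left(\psi|_{Y_i}\right) \leq h\left(\psi\right)$, whereas you justify it explicitly by extending an invariant measure from $Y_i$ to $Y$ --- a harmless (and arguably welcome) addition, together with your careful checks of measurability and Radon-ness.
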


\begin{proof}
By Proposition \ref{proposition-2-1-8-ferraiol-dissertacao-remark-2-20-caldas-patrao:entropia}, for each $\psi$-invariant Radon probability measure $\mu$, we have that
\begin{eqnarray}
h_\mu\left(\psi\right) 
& = & \mu\left(Y_1\right)h_{\mu_{Y_1}}\left(\psi|_{Y_1}\right) + \cdots + \mu\left(Y_j\right)h_{\mu_{Y_j}}\left(\psi|_{Y_j}\right) \nonumber \\
& \leq & \mu\left(Y_1\right)h\left(\psi|_{Y_1}\right) + \cdots + \mu\left(Y_j\right)h\left(\psi|_{Y_j}\right) \nonumber \\
& \leq & \max_{i = 1,\ldots,j} h\left(\psi|_{Y_i}\right)
\end{eqnarray}
where we used Proposition \ref{theorem-3-1-caldas-patrao:entropia} in first inequality. Hence
\begin{eqnarray}
h\left(\psi\right) 
& = & \sup_{\mu} h_\mu\left(\psi\right) \nonumber \\
& \leq & \max_{i = 1,\ldots,j} h\left(\psi|_{Y_i}\right) \nonumber \\
& \leq & h\left(\psi\right)
\end{eqnarray}
\end{proof}

Now we prove the following variational principle which is crucial for the computation of the topological entropy of powers when $G$ is noncompact.

\begin{teorema}\label{theo-variational-principle}
Let $X$ be a metric space and $\psi: X \to X$ be a continuous map. If
\begin{equation}
X = \bigcup_{l = 0}^\infty C_l
\end{equation}
where each $C_l$ is a $\psi$-invariant compact subset and $C_l \subset C_{l+1}$ for each $l$, then
\begin{equation}
\sup_{\mu} h_\mu\left(\psi\right) = \lim_{l \to \infty} h\left(\psi|_{C_l}\right)
\end{equation}
where the supremum is taken over all $\psi$-invariant Radon probability measures $\mu$ on $X$.
\end{teorema}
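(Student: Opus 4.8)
The strategy is to prove two inequalities. For the easier direction ($\leq$): any $\psi$-invariant Radon probability measure $\mu$ on $X$ must, by countable additivity and $C_l \subset C_{l+1}$, satisfy $\mu(C_l) \to 1$ as $l \to \infty$. Given such a $\mu$ and $\varepsilon > 0$, pick $l$ large so that $\mu(C_l) > 1 - \varepsilon$. Applying Proposition \ref{proposition-2-1-8-ferraiol-dissertacao-remark-2-20-caldas-patrao:entropia} with $X := C_l$ (note $C_l$ is $\psi$-invariant and measurable), we get
\[
h_\mu(\psi) = \mu(C_l) h_{\mu_{C_l}}(\psi|_{C_l}) + \mu(X \setminus C_l) h_{\mu_{X \setminus C_l}}(\psi|_{X\setminus C_l}).
\]
The first term is bounded by $h_{\mu_{C_l}}(\psi|_{C_l}) \leq h(\psi|_{C_l}) \leq \lim_{l\to\infty} h(\psi|_{C_l})$ using Proposition \ref{theorem-3-1-caldas-patrao:entropia} (valid since each $C_l$ is compact metric, hence locally compact separable) and monotonicity of $h(\psi|_{C_l})$ in $l$ (which follows from $C_l \subset C_{l+1}$ being $\psi$-invariant subsets and, e.g., Proposition \ref{prop-disjoint-union} or directly from the definition of cover entropy restricted to a subspace). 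The main subtlety here is the second term: $X \setminus C_l$ need not be locally compact, so I cannot invoke the variational principle on it directly; instead I will bound $h_{\mu_{X\setminus C_l}}(\psi|_{X\setminus C_l})$ abstractly. Since $\psi|_{X \setminus C_l}$ is still $\mu_{X\setminus C_l}$-invariant, its Kolmogorov--Sinai entropy is at least $0$, but I need an \emph{upper} bound to control the term $\mu(X\setminus C_l) h_{\mu_{X\setminus C_l}}(\psi|_{X\setminus C_l}) \leq \varepsilon \cdot h_{\mu_{X\setminus C_l}}(\psi|_{X\setminus C_l})$. The cleanest fix is to iterate: for each $l$, write $X\setminus C_l = \bigcup_{m \geq l}(C_{m+1}\setminus C_m)$ (a countable partition into $\psi$-invariant pieces, with $C_{m+1}\setminus C_m$ locally compact as the difference of a compact and a closed set), apply the countable-partition version of Proposition \ref{proposition-2-1-8-ferraiol-dissertacao-remark-2-20-caldas-patrao:entropia}, and bound each $h_{\mu_{C_{m+1}\setminus C_m}}(\psi|_{C_{m+1}\setminus C_m}) \leq h(\psi|_{C_{m+1}}) \leq \lim_l h(\psi|_{C_l})$. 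Summing over the partition of all of $X$ then gives $h_\mu(\psi) \leq \lim_{l\to\infty} h(\psi|_{C_l})$ directly, and taking $\sup_\mu$ yields the $\leq$ inequality.

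For the reverse inequality ($\geq$): fix $l$ and apply Proposition \ref{theorem-3-1-caldas-patrao:entropia} to the compact metric (hence locally compact separable) space $C_l$ with the map $\psi|_{C_l}$, obtaining a $\psi|_{C_l}$-invariant Radon probability measure $\mu_l$ on $C_l$ with $h_{\mu_l}(\psi|_{C_l}) \geq h(\psi|_{C_l}) - 1/l$ (by the variational principle the supremum is attained or approximated; in the compact metric case it is in fact attained by upper semicontinuity, but approximation suffices). Push $\mu_l$ forward to a Radon probability measure on $X$ via the inclusion $C_l \hookrightarrow X$ — it remains $\psi$-invariant since $C_l$ is $\psi$-invariant — and call it $\tilde\mu_l$. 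Then, again by Proposition \ref{proposition-2-1-8-ferraiol-dissertacao-remark-2-20-caldas-patrao:entropia} applied to the $\psi$-invariant measurable subset $C_l \subset X$ (noting $\tilde\mu_l(X\setminus C_l) = 0$), we get $h_{\tilde\mu_l}(\psi) = h_{\mu_l}(\psi|_{C_l}) \geq h(\psi|_{C_l}) - 1/l$. Hence $\sup_\mu h_\mu(\psi) \geq h(\psi|_{C_l}) - 1/l$ for every $l$, and letting $l \to \infty$ gives $\sup_\mu h_\mu(\psi) \geq \lim_{l\to\infty} h(\psi|_{C_l})$ (the limit exists since $h(\psi|_{C_l})$ is nondecreasing in $l$; it may be $+\infty$, in which case both sides are $+\infty$).

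The main obstacle I anticipate is the first direction, specifically justifying that the "tail" contribution of $X\setminus C_l$ is harmless: one must handle a map on a possibly non-locally-compact space where Proposition \ref{theorem-3-1-caldas-patrao:entropia} is unavailable. The countable-partition decomposition $X = C_0 \cup \bigcup_{m\geq 0}(C_{m+1}\setminus C_m)$ circumvents this by reducing everything to the locally compact pieces $C_{m+1}\setminus C_m$, at the cost of needing the countably-infinite version of the additivity formula in Proposition \ref{proposition-2-1-8-ferraiol-dissertacao-remark-2-20-caldas-patrao:entropia} — which should follow from its finite version by a standard monotone-convergence argument on the partial sums, together with the fact that each term is a product $\mu(C_{m+1}\setminus C_m) h_{\mu_{C_{m+1}\setminus C_m}}(\psi|_{\cdot})$ dominated by $\mu(C_{m+1}\setminus C_m)\cdot \lim_l h(\psi|_{C_l})$, whose sum telescopes to $\lim_l h(\psi|_{C_l})$. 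A secondary point to check carefully is the monotonicity $h(\psi|_{C_l}) \leq h(\psi|_{C_{l+1}})$, which holds because restricting to a closed invariant subset cannot increase cover entropy (every admissible cover of $C_l$ extends to one of $C_{l+1}$), so the limit on the right-hand side is well-defined in $[0,+\infty]$.
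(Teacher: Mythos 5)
The reverse inequality ($\geq$) in your proposal is correct and essentially identical to the paper's argument, but the hard direction ($\leq$) has a genuine gap. Your plan rests on a ``countable-partition version'' of Proposition \ref{proposition-2-1-8-ferraiol-dissertacao-remark-2-20-caldas-patrao:entropia}, namely $h_\mu(\psi)=\sum_m \mu(D_m)\,h_{\mu_{D_m}}(\psi|_{D_m})$ for $D_m=C_{m+1}\setminus C_m$, which you claim follows from the finite version ``by a standard monotone-convergence argument on the partial sums''. It does not: applying the finite version to $C_0, D_0,\dots,D_{M-1}$ and the remainder $R_M=X\setminus C_M$ gives $h_\mu(\psi)=\sum_{m<M}\mu(D_m)h_{\mu_{D_m}}(\psi|_{D_m})+\mu(R_M)h_{\mu_{R_M}}(\psi|_{R_M})$, and since all terms are nonnegative, letting $M\to\infty$ only yields $h_\mu(\psi)\ge\sum_m\mu(D_m)h_{\mu_{D_m}}(\psi|_{D_m})$. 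The inequality you actually need, $h_\mu(\psi)\le\sum_m\mu(D_m)h_{\mu_{D_m}}(\psi|_{D_m})$, requires showing that the tail term $\mu(R_M)h_{\mu_{R_M}}(\psi|_{R_M})$ tends to zero --- but $R_M$ is exactly the non-locally-compact set whose entropy you said at the outset you could not bound, and $\mu(R_M)\to 0$ alone does not control the product, since $h_{\mu_{R_M}}(\psi|_{R_M})$ has no a priori bound and may be infinite. So the proposed reduction is circular: the countable additivity you invoke is essentially equivalent to the statement being proved. A countable version is in fact true, but it needs a genuinely heavier tool (ergodic decomposition/Jacobs' theorem, or countable affinity of $\nu\mapsto h_\nu(\psi)$ with control of the term $-\sum_m\mu(D_m)\log\mu(D_m)$), none of which is among the cited propositions.

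There is also a secondary flaw: under the stated hypothesis ($\psi(C_l)\subset C_l$), the differences $C_{m+1}\setminus C_m$ need not be $\psi$-invariant as sets --- a point of $C_{m+1}\setminus C_m$ may map into $C_m$ --- so ``$\psi|_{C_{m+1}\setminus C_m}$'' is not a self-map and Proposition \ref{theorem-3-1-caldas-patrao:entropia} cannot be applied to it as written; they are invariant only mod $\mu$, and one would have to view $\mu_{D_m}$ as a $\psi$-invariant measure on the compact set $C_{m+1}$ and use the compact variational principle there. For comparison, the paper avoids all of this by a Misiurewicz-type argument with finite partitions: given $\mu$ and $n$, choose a finite partition $\mathcal{P}$ with $h_\mu(\psi^n)\le h_\mu(\psi^n,\mathcal{P})+1$, shrink its atoms to compact pieces intersected with a suitable single $C_l$, compare the resulting partition $\mathcal{P}_l$ with a strongly admissible open cover of $X$ and then with a cover of $C_l$, obtaining $h_\mu(\psi^n)\le h(\psi|_{C_l}^n)+2+\log 2$, and divide by $n$; this is the missing mechanism that controls the tail without any countable decomposition.
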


\begin{proof}
In order to prove that 
\begin{equation}
\sup_{\mu} h_\mu\left(\psi\right) \leq \lim_{l \to \infty} h\left(\psi|_{C_l}\right)
\end{equation}
it is sufficient to show that, given a $\psi$-invariant Radon probability measure $\mu$, there exists $l$ such that
\begin{equation}\label{eq-variational-principle-1}
h_\mu\left(\psi\right) \leq h\left(\psi|_{C_l}\right)
\end{equation}
since 
\begin{equation}
h\left(\psi|_{C_l}\right) \leq \lim_{l \to \infty} h\left(\psi|_{C_l}\right)
\end{equation}
In fact, it is enough to show that there exists $l$ such that
\begin{equation}\label{eq-variational-principle-2}
h_\mu\left(\psi^n\right) \leq h\left(\psi|_{C_l}^n\right) + 2 + \log(2)
\end{equation}
for each natural number $n$, since
\begin{eqnarray}
h_\mu\left(\psi\right)
& = & \frac{1}{n} h_\mu\left(\psi^n\right) \nonumber \\
& \leq & \frac{1}{n} h\left(\psi|_{C_l}^n\right) + \frac{2 + \log(2)}{n} \nonumber \\
& \leq & h\left(\psi|_{C_l}\right) + \frac{2 + \log(2)}{n}
\end{eqnarray}
where we used Proposition \ref{proposition-2-1-8-ferraiol-dissertacao-remark-2-20-caldas-patrao:entropia} in the equality and Proposition \ref{lemma-2-19-caldas-patrao:entropia} in the second inequality. Hence inequality (\ref{eq-variational-principle-1}) follows by taking the limit as $n$ goes to the infinity. In order to show inequality (\ref{eq-variational-principle-2}), take a finite measurable partition $\mathcal{P}$ such that
\begin{equation}\label{eq-variational-principle-3}
h_\mu\left(\psi^n\right) \leq h_\mu\left(\psi^n, \mathcal{P}\right) + 1
\end{equation}
For each $P \in \mathcal{P}$, chose $C_P \subset P$ compact such that
\begin{equation}
\mu\left(P \backslash C_P\right) \leq \frac{1}{2N\left(\mathcal{P}\right) \log N\left(\mathcal{P}\right)}
\end{equation}
where $N\left(\mathcal{P}\right)$ is the cardinality of $\mathcal{P}$. Since $X = \bigcup_{l = 0}^\infty C_l$, $\mu(X) = 1$, and $C_l \subset C_{l+1}$ for each $l$, it follows that
\begin{equation}
\lim_{l \to \infty} \mu\left(C_l\right) = 1
\end{equation}
Thus we can choose $l$ such that 
\begin{equation}
\mu\left(X \backslash C_l\right) \leq \frac{1}{2N\left(\mathcal{P}\right) \log N\left(\mathcal{P}\right)}
\end{equation}
and define
\begin{equation}
C_P^l = C_P \cap C_l
\end{equation}
It follows that
\begin{eqnarray}
\mu\left(P \backslash C_P^l \right)
& = & \mu\left(P \cap (C_P^l)^c \right) \nonumber \\
& = & \mu\left(P \cap (C_P \cap C_l)^c \right) \nonumber \\
& = & \mu\left(P \cap ((C_P)^c \cup (C_l)^c) \right) \nonumber \\
& = & \mu\left( (P \cap (C_P)^c) \cup (P \cap (C_l)^c) \right) \nonumber \\
& \leq & \mu\left(P \cap (C_P)^c\right) + \mu\left(P \cap (C_l)^c\right) \nonumber \\
& \leq & \mu\left(P \backslash C_P \right) + \mu\left(X \backslash C_l \right) \nonumber \\
& \leq & \frac{1}{N\left(\mathcal{P}\right) \log N\left(\mathcal{P}\right)}
\end{eqnarray}
Defining 
\begin{equation}
P^l = \bigcup_{P \in \mathcal{P}} P \backslash C_P^l
\end{equation}
it follows that
\begin{equation}
\mu\left(P^l \right) \leq \frac{1}{\log N\left(\mathcal{P}\right)}
\end{equation}
Define the following measurable partition
\begin{equation}
\mathcal{P}_l = \{ C_P^l : P \in \mathcal{P} \} \cup \{P^l\}
\end{equation}
and the strongly admissible cover
\begin{equation}
\mathcal{A} = \{ C_P^l \cup P^l : P \in \mathcal{P} \}
\end{equation}
We claim that 
\begin{equation}\label{eq-variational-principle-4}
H_\mu\left(\mathcal{P}|\mathcal{P}_l\right) \leq 1
\end{equation}
In fact, first note that $\mu\left(P | C_P^l \right) = 1$ for every $P \in \mathcal{P}$. Thus
\begin{equation}
H_{\mu(.|C_P^l)}\left(\mathcal{P}\right) = 0
\end{equation}
and therefore, by the definition of conditional entropy and by Proposition \ref{lemma-2-6-caldas-patrao:entropia-lemma-2-12-caldas-patrao:entropia}, it follows that
\begin{eqnarray}
H_\mu\left(\mathcal{P}|\mathcal{P}_l\right)
& = & \mu\left(P^l\right)H_{\mu(.|P^l)}\left(\mathcal{P}\right) \nonumber \\
& \leq & \mu\left(P^l\right)\log N\left(\mathcal{P}\right) \nonumber \\
& \leq & 1
\end{eqnarray}
We claim that
\begin{equation}\label{eq-variational-principle-5}
N\left(\mathcal{P}_l^j\right) \leq 2^jN\left(\mathcal{A}^j\right)
\end{equation}
where $\mathcal{A}^j$ is the strongly admissible cover given by the following subsets
\begin{equation}
\left(C_{P_0}^l \cup P^l\right) \cap \psi^{-n}\left(C_{P_1}^l \cup P^l\right)  \cap \cdots \cap \psi^{-n(j-1)}\left(C_{P_{j-1}}^l \cup P^l\right)
\end{equation}
with $P_i \in \mathcal{P}$ for each $i$, and $\mathcal{P}_l^j$ is the measurable partition given by the following subsets
\begin{equation}
Y_0 \cap \psi^{-n}\left(Y_1\right)  \cap \cdots \cap \psi^{-n(j-1)}\left(Y_{j-1}\right)
\end{equation}
where $Y_i = C_{P_i}^l$ or $Y_i = P^l$ for each $i$. Let $m$ be the cardinality of $\mathcal{P}$ and $\Lambda \subset \{1, \ldots, m\}^j$ such that its cardinality is $N\left(\mathcal{A}^j\right)$ and that
\begin{equation}
X = \bigcup_{\lambda \in \Lambda} \left(C_{P_{\lambda_0}}^l \cup P^l\right) \cap \cdots \cap \psi^{-n(j-1)}\left(C_{P_{\lambda_{j-1}}}^l \cup P^l\right)
\end{equation}
where $\lambda = (\lambda_0, \ldots, \lambda_{j-1})$. Consider the map $f: \Lambda \times \{0,1\}^j \to \mathcal{P}_l^j$ given by
\begin{equation}
f(\lambda,x) = Y_0 \cap \psi^{-n}\left(Y_1\right)  \cap \cdots \cap \psi^{-n(j-1)}\left(Y_{j-1}\right)
\end{equation}
with 
\begin{equation}
Y_i 
=
\left\{
\begin{array}{lr}
C_{P_{\lambda_i}}^l, & x_i = 1 \\
P^l, & x_i = 0
\end{array}
\right.
\end{equation}
where $x = (x_0, \ldots, x_{j-1})$. Since 
\begin{equation}
X = \bigcup_{\lambda \in \Lambda, \, x \in \{0,1\}^j }f(\lambda,x)
\end{equation}
and since $\mathcal{P}_l^j$ is a partition, it follows that the image of $f$ contains every nonempty element of $\mathcal{P}_l^j$, which implies the inequality (\ref{eq-variational-principle-5}). Now consider the strongly admissible cover of $C_l$ given by
\begin{equation}
\mathcal{A}_l = \{ \left(C_P^l \cup P_l\right) \cap C_l : P \in \mathcal{P} \}
\end{equation}
We claim that
\begin{equation}\label{eq-variational-principle-6}
N\left(\mathcal{A}^j\right) \leq N\left(\mathcal{A}_l^j\right)
\end{equation}
where $\mathcal{A}_l^j$ is the strongly admissible cover given by the following subsets
\begin{equation}
\left(\left(C_{P_0}^l \cup P^l\right) \cap C_l \right) \cap \cdots \cap \psi^{-n(j-1)}\left(\left(C_{P_{j-1}}^l \cup P^l\right) \cap C_l\right)
\end{equation}
with $P_i \in \mathcal{P}$ for each $i$. Let $\Delta \subset \{1, \ldots, m\}^j$ such that its cardinality is $N\left(\mathcal{A}_l^j\right)$ and that
\begin{equation}
C_l = \bigcup_{\delta \in \Delta} \left(\left(C_{P_{\delta_0}}^l \cup P^l\right) \cap C_l \right) \cap \cdots \cap \psi^{-n(j-1)}\left(\left(C_{P_{\delta_{j-1}}}^l \cup P^l\right) \cap C_l\right)
\end{equation}
where $\delta = (\delta_0, \ldots, \delta_{j-1})$. Consider the map $g: \Delta \to \mathcal{A}^j$ given by
\begin{equation}
g(\delta) = \left(C_{P_{\delta_0}}^l \cup P^l\right) \cap \cdots \cap \psi^{-n(j-1)}\left(C_{P_{\delta_{j-1}}}^l \cup P^l\right)
\end{equation} 
Since $C_l^c \subset P^l$ and since $C_l$ is $\psi^n$-invariant, it follows that $C_l^c \subset g(\delta)$ for each $\delta$ and that
\begin{equation}
C_l = \left(\bigcup_{\delta \in \Delta} g(\delta) \right) \cap C_l
\end{equation} 
Hence 
\begin{equation}
X = C_l \cup C_l^c \subset \bigcup_{\delta \in \Delta} g(\delta)
\end{equation} 
showing inequality (\ref{eq-variational-principle-6}). Taking the logarithm of inequalities (\ref{eq-variational-principle-5}) and (\ref{eq-variational-principle-6}), dividing by $l$ and taking the limit as $l$ tends to infinity, it following that
\begin{eqnarray}\label{eq-variational-principle-7}
h_\mu\left(\psi^n,\mathcal{P}_l\right)
& \leq & h\left(\psi^n,\mathcal{A}\right) + \log(2) \nonumber \\
& \leq & h\left(\psi|_{C_l}^n,\mathcal{A}_l\right) + \log(2) \nonumber \\
& \leq & h\left(\psi|_{C_l}^n\right) + \log(2)
\end{eqnarray}
From inequalities (\ref{eq-variational-principle-3}), (\ref{eq-variational-principle-4}), (\ref{eq-variational-principle-7}), and Proposition \ref{lemma-2-6-caldas-patrao:entropia-lemma-2-12-caldas-patrao:entropia}, it follows that
\begin{eqnarray}
h_\mu\left(\psi^n\right)
& \leq & h_\mu\left(\psi^n,\mathcal{P}\right) + 1 \nonumber \\
& \leq & h_\mu\left(\psi^n,\mathcal{P}_l\right) + H_\mu\left(\mathcal{P}|\mathcal{P}_l\right) + 1 \nonumber \\
& \leq & h_\mu\left(\psi^n,\mathcal{P}_l\right) + 2 \nonumber \\
& \leq & h\left(\psi|_{C_l}^n\right) + \log(2) + 2
\end{eqnarray}
In order to prove that 
\begin{equation}\label{eq-variational-principle-7}
\lim_{l \to \infty} h\left(\psi|_{C_l}\right) \leq \sup_{\mu} h_\mu\left(\psi\right)
\end{equation}
for each $\varepsilon > 0$, there exists $l$ such that
\begin{equation}
\lim_{l \to \infty} h\left(\psi|_{C_l}\right) \leq h\left(\psi|_{C_l}\right) + \frac{\varepsilon}{2}
\end{equation}
and, by the variational principle of entropy for compact spaces, there exists a $\psi|_{C_l}$-invariant Radon probability measure $\mu_l$ such that 
\begin{equation}
h\left(\psi|_{C_l}\right) < h_{\mu_l}\left(\psi|_{C_l}\right) + \frac{\varepsilon}{2}
\end{equation}
Considering the following $\psi$-invariant Radon probability measure $\mu$ given by
\begin{equation}
\mu\left(A\right) = \mu_l\left(A \cap C_l\right)
\end{equation}
it follows that
\begin{equation}
h_\mu\left(\psi\right) = h_{\mu_l}\left(\psi|_{C_l}\right)
\end{equation}
By the previous inequalities, it follows that
\begin{eqnarray}
\lim_{l \to \infty} h\left(\psi|_{C_l}\right)
& < & h_\mu\left(\psi\right) + \varepsilon \nonumber \\
& \leq & \sup_{\mu} h_\mu\left(\psi\right) + \varepsilon
\end{eqnarray}
Since $\varepsilon$ is arbitrary, we obtain inequality (\ref{eq-variational-principle-7}).
\end{proof}

\begin{corolario}\label{cor-variational-principle}
Let $Y$ be a separable metric space and $\psi: Y \to Y$ be a continuous map. If there exists $X \subset Y$ such that $\mathcal{R}_{\psi} \subset X$ and
\begin{equation}
X = \bigcup_{l = 0}^\infty C_l
\end{equation}
where each $C_l$ is a $\psi$-invariant compact subset and $C_l \subset C_{l+1}$ for each $l$, then
\begin{equation}
\sup_{\mu} h_\mu\left(\psi\right) = \lim_{l \to \infty} h\left(\psi|_{C_l}\right)
\end{equation}
where the supremum is taken over all $\psi$-invariant Radon probability measures $\mu$ on $X$.
\end{corolario}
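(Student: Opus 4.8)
The plan is to deduce the statement from Theorem \ref{theo-variational-principle} applied to the metric space $X$ (with its subspace topology) and the restricted map $\psi|_X : X \to X$. First note that $\psi|_X$ is well defined: since each $C_l$ is $\psi$-invariant, $\psi(X) = \bigcup_l \psi(C_l) \subseteq \bigcup_l C_l = X$. Moreover $X = \bigcup_l C_l$ is exactly the exhaustion by increasing $\psi$-invariant compact sets required by Theorem \ref{theo-variational-principle}, so that theorem yields $\sup_{\nu} h_\nu(\psi|_X) = \lim_{l\to\infty} h(\psi|_{C_l})$, the supremum being over all $\psi|_X$-invariant Radon probability measures $\nu$ on $X$. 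It therefore remains to identify the $\psi$-invariant Radon probability measures on $Y$ with the $\psi|_X$-invariant Radon probability measures on $X$ in an entropy-preserving way.

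The crucial point is that every $\psi$-invariant Radon probability measure $\mu$ on $Y$ is concentrated on $X$; this is where the hypothesis $\mathcal{R}_\psi \subseteq X$ enters, via the Poincar\'e recurrence theorem. Since $Y$ is separable metric it is second countable; fixing a countable base $\{U_i\}$ and applying Poincar\'e recurrence to each $U_i$, the set of points that fail to return to some basic neighbourhood of themselves infinitely often is a countable union of $\mu$-null sets, hence $\mu$-null, and its complement consists of recurrent points. Writing $\mathcal{R}_\psi = \bigcap_{k\geq 1}\bigcap_{N\geq 1}\bigcup_{n\geq N}\{x : d(\psi^n(x),x) < 1/k\}$ shows $\mathcal{R}_\psi$ is Borel, and the above gives $\mu(\mathcal{R}_\psi) = 1$. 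Since $X = \bigcup_l C_l$ is Borel and contains $\mathcal{R}_\psi$, we conclude $\mu(X) = 1$.

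With this in hand the correspondence is routine. Given such a $\mu$, its restriction $\bar\mu = \mu|_X$ is a $\psi|_X$-invariant probability measure on $X$ which is still Radon, since for a Borel $B \subseteq X$ one has $\bar\mu(B) = \lim_l \bar\mu(B \cap C_l)$ while $\bar\mu$ restricted to each compact metric $C_l$ is inner regular. As $X$ is a $\psi$-invariant measurable subset of $Y$ with $\mu(X) = 1$, Proposition \ref{proposition-2-1-8-ferraiol-dissertacao-remark-2-20-caldas-patrao:entropia} gives $h_\mu(\psi) = h_{\bar\mu}(\psi|_X)$. Conversely, given a $\psi|_X$-invariant Radon probability measure $\nu$ on $X$, the set function $\mu(A) = \nu(A\cap X)$ is a $\psi$-invariant Radon probability measure on $Y$ (inner regularity of $\mu$ follows from that of $\nu$ because compact subsets of $X$ are compact subsets of $Y$) with $\mu(X) = 1$, and again Proposition \ref{proposition-2-1-8-ferraiol-dissertacao-remark-2-20-caldas-patrao:entropia} gives $h_\mu(\psi) = h_\nu(\psi|_X)$. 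Hence $\sup_\mu h_\mu(\psi) = \sup_\nu h_\nu(\psi|_X)$, and combining this with the first paragraph finishes the proof.

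The main obstacle is not conceptual but a matter of keeping the measure theory honest: checking that $\mathcal{R}_\psi$ and $X$ are Borel, that restriction and extension preserve the Radon property on the possibly non-locally-compact space $X$, and that Proposition \ref{proposition-2-1-8-ferraiol-dissertacao-remark-2-20-caldas-patrao:entropia} is applicable to $X \subseteq Y$. Conceptually everything rests on the single observation that invariant measures see only the recurrent set, which is exactly what lets us bypass the local compactness of $Y$ assumed in Proposition \ref{theorem-3-1-caldas-patrao:entropia} and work instead on the $\sigma$-compact set $X$.
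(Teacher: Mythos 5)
Your proposal is correct and follows essentially the same route as the paper: invoke Poincar\'e recurrence (via $\mathcal{R}_\psi \subset X$) to see that every $\psi$-invariant Radon probability measure concentrates on $X$, use Proposition \ref{proposition-2-1-8-ferraiol-dissertacao-remark-2-20-caldas-patrao:entropia} to identify $h_\mu(\psi)$ with the entropy of the restricted system on $X$, and then apply Theorem \ref{theo-variational-principle} to the exhaustion $X = \bigcup_l C_l$. Your write-up is merely more explicit about the measure-theoretic bookkeeping (Borel measurability of $\mathcal{R}_\psi$ and preservation of the Radon property), which the paper leaves implicit.
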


\begin{proof}
For each $\psi$-invariant Radon probability measure $\mu$, we have that
\begin{equation}
h_\mu\left(\psi\right) = \mu\left(X\right)h_{\mu_X}\left(\psi|_{X}\right) + \mu\left(Y \backslash X \right)h_{\mu_{Y \backslash X}}\left(\psi|_{Y \backslash X}\right)
\end{equation}
Since $\mathcal{R}_{\psi|_{Y \backslash X}} = \emptyset$, by Poincar\'e Recurrence Theorem, it follows that $\mu_{Y \backslash X} = 0$ and thus that $h_{\mu_{Y \backslash X}}\left(\psi|_{Y \backslash X}\right) = 0$. Hence
\begin{equation}
h_\mu\left(\psi\right) \leq h_{\mu_X}\left(\psi|_{X}\right)
\end{equation}
and thus it follows that
\begin{equation}
\sup_{\mu} h_\mu\left(\psi\right) = \sup_{\mu_X} h_{\mu_X}\left(\psi|_{X}\right)
\end{equation}
and the result follows from Theorem \ref{theo-variational-principle}.
\end{proof}

\section{Preliminaries on Lie theory}\label{Lie-Theory} 

In this section, we collect the main concepts and results, and prove some new ones, about Lie theory that are used in the final section. Given a Lie group $G$ with Lie algebra $\mathfrak{g}$, the connected component of the identity of $G$ is denoted by $G_0$. The center of $\mathfrak{g}$ is given by
\begin{equation}
\mathfrak{z}(\mathfrak{g}) = \{H \in \mathfrak{g} : [H,X] = 0, \mbox{ for all } X \in \mathfrak{g}\}
\end{equation}
which is an ideal of $\mathfrak{g}$ (see Lemma 11.1.1 of \cite{hilgert-neeb}), and the derived algebra $[\mathfrak{g},\mathfrak{g}]$ is the subalgebra generated by the subset
\begin{equation}
\{[X,Y] : X,Y \in \mathfrak{g}\}
\end{equation}
Given a subgroup $H \subset G$, the centralizer of a subgroup $H$ in $G$ is given by
\begin{equation}
Z(H,G) = \{g \in G : ghg^{-1} = h, \mbox{ for all } h \in H \}
\end{equation}
and the normalizer of a subgroup $H$ in $G$ is given by
\begin{equation}
N(H,G) = \{g \in G : gHg^{-1} = H \}
\end{equation}
The adjoint representation of $G$ is the map given by $\mathrm{Ad}(g) = d(C_g)_1$, where $C_g(h) = ghg^{-1}$ is the conjugation by $g \in G$. They are related by the following formula
\begin{equation}
\exp\left(\mathrm{Ad}(g)X\right) = g\exp(X)g^{-1}
\end{equation}
for all $g \in G$ and all $X \in \mathfrak{g}$, where $\exp : \mathfrak{g} \to G$ is the exponential map of $G$. The proof of the following result can be found in Section 9.5 of \cite{hilgert-neeb}, specially using Theorem 9.5.4 and Example 9.5.6 of \cite{hilgert-neeb}.

\begin{proposicao}\label{torus-discrete}
If $T$ is a torus, then $T \simeq \mathbb{R}^n/\mathbb{Z}^n$ and the group of the automorphisms of $T$ is isomorphic to the discrete Lie group $\mathrm{GL}(n,\mathbb{Z})$. 
\end{proposicao}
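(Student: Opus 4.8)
The plan is to first realize $T$ concretely as the quotient of its Lie algebra by a full lattice, and then to identify every automorphism of $T$ with the automorphism it induces on that lattice.

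First I would use that a torus is, by definition, a compact connected abelian Lie group, so its Lie algebra $\mathfrak{t}$ is abelian and, as a real vector space, isomorphic to $\mathbb{R}^n$ with $n = \dim(T)$. Since $T$ is connected and abelian, the exponential map $\exp : \mathfrak{t} \to T$ is a Lie group homomorphism whose image is open, hence equal to $T$; its kernel $\Gamma$ is a discrete subgroup of $\mathfrak{t}$ because $\exp$ is a local diffeomorphism near $0$. As $T = \mathfrak{t}/\Gamma$ is compact, $\Gamma$ must have rank $n$, i.e. span $\mathfrak{t}$ over $\mathbb{R}$. Picking a $\mathbb{Z}$-basis of $\Gamma$ produces a linear isomorphism $\mathfrak{t} \to \mathbb{R}^n$ sending $\Gamma$ onto $\mathbb{Z}^n$, and passing to quotients gives $T \simeq \mathbb{R}^n/\mathbb{Z}^n$.

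Next, for the automorphism group, I would exploit the naturality of $\exp$. A continuous automorphism $\phi : T \to T$ is automatically smooth, and its differential $d\phi_1 : \mathfrak{t} \to \mathfrak{t}$ is a linear automorphism with $\phi \circ \exp = \exp \circ d\phi_1$; bijectivity of $\phi$ then forces $d\phi_1(\Gamma) = \Gamma$, so under the identifications above $d\phi_1$ restricts to an element of $\mathrm{GL}(n,\mathbb{Z})$. This defines a group homomorphism $\mathrm{Aut}(T) \to \mathrm{GL}(n,\mathbb{Z})$, $\phi \mapsto d\phi_1|_\Gamma$. It is injective since $\exp$ is surjective, so $\phi$ is determined by $d\phi_1$, and it is surjective since any $A \in \mathrm{GL}(n,\mathbb{Z})$ acts linearly on $\mathbb{R}^n$ preserving $\mathbb{Z}^n$ and hence descends to an automorphism of $\mathbb{R}^n/\mathbb{Z}^n \simeq T$ inducing $A$ on the lattice. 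As $\mathrm{GL}(n,\mathbb{Z})$ is a countable, $0$-dimensional Lie group, this isomorphism shows $\mathrm{Aut}(T)$ is a discrete Lie group.

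The one point requiring care is the meaning of \emph{automorphism}: it must be taken in the category of continuous (equivalently smooth) Lie group homomorphisms, since as an abstract group $\mathbb{R}^n/\mathbb{Z}^n$ has many pathological automorphisms coming from $\mathbb{Q}$-linear self-maps of $\mathbb{R}$. Once continuity is assumed, the argument reduces to the elementary linear-algebra fact that the invertible linear maps of $\mathbb{R}^n$ preserving $\mathbb{Z}^n$ are precisely the integer matrices with integer inverse; no step is genuinely difficult, the content being the correct assembly of the standard structural facts, as developed in Section~9.5 of \cite{hilgert-neeb}.
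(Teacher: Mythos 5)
Your argument is correct, and it is the standard one; note that the paper does not prove this proposition at all but simply imports it from Section~9.5 of \cite{hilgert-neeb} (Theorem 9.5.4 and Example 9.5.6), so your write-up essentially reconstructs the cited source: realize $T=\mathfrak{t}/\Gamma$ via the surjective exponential with discrete full-rank kernel, then identify $\mathrm{Aut}(T)$ with the stabilizer of the lattice, i.e.\ $\mathrm{GL}(n,\mathbb{Z})$. All the individual steps you give are sound (openness of $\exp(\mathfrak{t})$ in a connected group, compactness forcing $\Gamma$ to have rank $n$, automatic smoothness of continuous automorphisms, $d\phi_1(\Gamma)=\Gamma$ from applying the lifting relation to $\phi$ and $\phi^{-1}$, injectivity from surjectivity of $\exp$, surjectivity by descending an integer matrix). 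The one place where you are slightly too quick is the final sentence: a group isomorphism with $\mathrm{GL}(n,\mathbb{Z})$ does not by itself show that $\mathrm{Aut}(T)$, with its natural topology, is discrete, and discreteness as a \emph{topological} group is exactly what the paper later uses (in Theorem \ref{theo-conjugation-normalizer-torus}, to conclude that a continuous map from the connected group $N(T,K)_0$ into $\mathrm{Aut}(T)$ has singleton image). To close this, add one observation: the correspondence $\phi\mapsto d\phi_1$ is continuous from $\mathrm{Aut}(T)$ (compact-open topology, equivalently its Lie group topology) into $\mathrm{GL}(n,\mathbb{R})$, its image $\mathrm{GL}(n,\mathbb{Z})$ is discrete in $\mathrm{GL}(n,\mathbb{R})$, and the map is injective, so $\mathrm{Aut}(T)$ is discrete and the isomorphism is one of (zero-dimensional) Lie groups. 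With that sentence added, your proof fully delivers what the paper needs from this proposition.
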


The proof following proposition can be found in Theorem 6.1.18 and Lemma 12.2.1 of \cite{hilgert-neeb}.

\begin{proposicao}\label{theorem-6-1-18-hilgert-neeb-lemma-12-2-1-hilgert-neeb}
If $\mathfrak{g}$ is a finite-dimensional Lie algebra and $H \in \mathfrak{g}$ is a regular element, then the centralizer $\frak{h}$ of $H$ in $\frak{g}$ is a Cartan subalgebra of $\frak{g}$. If $\mathfrak{g}$ is a compact Lie algebra, then a subalgebra $\frak{t} \subset \frak{g}$ is a Cartan subalgebra if and only if it is maximal abelian.
\end{proposicao}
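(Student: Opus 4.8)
The plan is to prove the two assertions separately, in both cases through the Fitting decomposition of $\mathrm{ad}(H)$. For the first assertion, write $\mathfrak{g}_{0}=\{X\in\mathfrak{g}:(\mathrm{ad}\,H)^{m}X=0\text{ for some }m\geq 1\}$ for the generalized null space of $\mathrm{ad}(H)$ (which coincides with the centralizer $\mathfrak{h}$ precisely when $\mathrm{ad}(H)$ is semisimple, in particular for compact $\mathfrak{g}$), and show that $\mathfrak{g}_{0}$ is a Cartan subalgebra. Decompose $\mathfrak{g}=\mathfrak{g}_{0}\oplus\mathfrak{g}_{1}$, where $\mathfrak{g}_{1}$ is the sum of the generalized eigenspaces of $\mathrm{ad}(H)$ for the nonzero eigenvalues; the Jacobi identity applied to generalized eigenvectors gives $[\mathfrak{g}_{0},\mathfrak{g}_{0}]\subseteq\mathfrak{g}_{0}$ and $[\mathfrak{g}_{0},\mathfrak{g}_{1}]\subseteq\mathfrak{g}_{1}$, so $\mathfrak{g}_{0}$ is a subalgebra and $\mathrm{ad}(H)$ restricts to an invertible operator on $\mathfrak{g}_{1}$.

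It then remains to check that $\mathfrak{g}_{0}$ is self-normalizing and nilpotent. For the first: if $X$ normalizes $\mathfrak{g}_{0}$ and $X=X_{0}+X_{1}$ is its decomposition, then $[X_{1},H]=[X,H]-[X_{0},H]$ lies both in $\mathfrak{g}_{0}$ and in $\mathfrak{g}_{1}$, hence vanishes, and invertibility of $\mathrm{ad}(H)|_{\mathfrak{g}_{1}}$ forces $X_{1}=0$, so $X\in\mathfrak{g}_{0}$. For nilpotency: $\mathrm{ad}(H)|_{\mathfrak{g}_{0}}$ is nilpotent by construction, so by Engel's theorem it suffices to show that $\mathrm{ad}(X)|_{\mathfrak{g}_{0}}$ is nilpotent for every $X\in\mathfrak{g}_{0}$. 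If not, then, using that $\mathfrak{g}_{0}$ and $\mathfrak{g}_{1}$ are $\mathrm{ad}(X)$-invariant for $X\in\mathfrak{g}_{0}$, the set of $X\in\mathfrak{g}_{0}$ for which $\mathrm{ad}(X)|_{\mathfrak{g}_{1}}$ is invertible (Zariski-open, nonempty since it contains $H$) and the set for which $\mathrm{ad}(X)|_{\mathfrak{g}_{0}}$ is not nilpotent (Zariski-open, nonempty by Engel and the supposed failure) would intersect; at a common point $X$ the generalized null space of $\mathrm{ad}(X)$ on $\mathfrak{g}$ would lie inside $\mathfrak{g}_{0}$ and be strictly smaller, contradicting the minimality of $\dim\mathfrak{g}_{0}$ that makes $H$ regular. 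Thus $\mathfrak{g}_{0}$ is nilpotent and self-normalizing, i.e. a Cartan subalgebra.

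For the second assertion, fix an $\mathrm{ad}$-invariant inner product $\langle\cdot,\cdot\rangle$ on the compact Lie algebra $\mathfrak{g}$, so that every $\mathrm{ad}(X)$ is skew-symmetric. If $\mathfrak{t}$ is maximal abelian it is nilpotent; to see it is self-normalizing, let $X$ normalize $\mathfrak{t}$ and split $X=X'+X''$ with $X'\in\mathfrak{t}$ and $X''\in\mathfrak{t}^{\perp}$. Since $X'$ centralizes $\mathfrak{t}$, also $X''$ normalizes $\mathfrak{t}$, and for $S,T\in\mathfrak{t}$ invariance gives $\langle[X'',T],S\rangle=-\langle X'',[T,S]\rangle=0$; as $[X'',T]\in\mathfrak{t}$, this forces $[X'',T]=0$ for all $T$, so $\mathfrak{t}+\mathbb{R}X''$ is abelian and $X''\in\mathfrak{t}\cap\mathfrak{t}^{\perp}=\{0\}$, giving $X\in\mathfrak{t}$; hence $\mathfrak{t}$ is a Cartan subalgebra. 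Conversely, if $\mathfrak{h}$ is a Cartan subalgebra, then for $X\in\mathfrak{h}$ the operator $\mathrm{ad}(X)|_{\mathfrak{h}}$ is skew-symmetric for the restricted inner product and nilpotent (Engel, since $\mathfrak{h}$ is nilpotent), hence zero, so $\mathfrak{h}$ is abelian; and any abelian $\mathfrak{a}\supseteq\mathfrak{h}$ normalizes $\mathfrak{h}$, so $\mathfrak{a}\subseteq N_{\mathfrak{g}}(\mathfrak{h})=\mathfrak{h}$, whence $\mathfrak{h}$ is maximal abelian.

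The only genuinely delicate point is the nilpotency of $\mathfrak{g}_{0}$ in the first part: it is the sole place where regularity of $H$ is used, and it requires the upper semicontinuity of $X\mapsto\dim\mathfrak{g}^{0}(\mathrm{ad}\,X)$ (equivalently, the Zariski-openness used above) together with Engel's theorem. Once the invariant inner product is in hand, the compact case is routine bookkeeping.
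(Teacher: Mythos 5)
Your proof is correct, but note that the paper itself contains no argument for this proposition: it is imported wholesale from Hilgert--Neeb (Theorem 6.1.18 and Lemma 12.2.1), so the comparison is with the textbook rather than with anything in the paper. Your route is the standard one and all steps check out: the Fitting decomposition $\mathfrak{g}=\mathfrak{g}_0\oplus\mathfrak{g}_1$ relative to $\mathrm{ad}(H)$, self-normalization of $\mathfrak{g}_0$ from invertibility of $\mathrm{ad}(H)|_{\mathfrak{g}_1}$, nilpotency of $\mathfrak{g}_0$ via Engel together with the Zariski-openness/minimality argument (this is where regularity enters, exactly as in the classical proof), and, in the compact case, the invariant inner product, skew-symmetry of $\mathrm{ad}$, and the observation that a skew-symmetric nilpotent operator is zero. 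You were also right to flag the mismatch between ``centralizer'' and ``generalized null space'': with ``centralizer'' the first claim is false for general finite-dimensional $\mathfrak{g}$ (in the Heisenberg algebra every element is regular, yet the centralizer of a noncentral element is normalized by elements outside it), so what one can prove in general is that $\mathfrak{g}^0(\mathrm{ad}\,H)$ is a Cartan subalgebra, the two notions agreeing precisely when $\mathrm{ad}(H)$ is semisimple. Your parenthetical covers exactly the situation in which the paper uses the result, namely the compact (semisimple) case in the proof of Theorem \ref{theo-conjugation-normalizer-torus}, where $\mathrm{ad}(H)$ is automatically semisimple; so your proof establishes everything the paper actually needs, and in addition makes explicit a hypothesis that the paper's citation-level statement leaves blurred.
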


The proof of the following result can be found in Theorem 4.5 of \cite{borel-mostow}.

\begin{proposicao}\label{theorem-4-5-borel-mostow}
A semi-simple (diagonalizable over the complex numbers) automorphism $\phi$ of a semi-simple Lie algebra $\frak{g}$ fixes a regular element $H \in \mathfrak{g}$.
\end{proposicao}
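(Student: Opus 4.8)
The plan is to reduce to the case of a complex Lie algebra, then use the structure theory of automorphisms of semi-simple Lie algebras to produce a $\phi$-invariant Borel subalgebra together with an invariant Cartan subalgebra inside it, and finally to write down a concrete invariant regular element. So first I would complexify: the induced automorphism $\phi_{\mathbb{C}} = \phi \otimes \mathrm{id}$ of $\mathfrak{g}_{\mathbb{C}}$ is again semi-simple, with fixed space $(\mathfrak{g}_{\mathbb{C}})^{\phi_{\mathbb{C}}} = \mathfrak{g}^{\phi} \otimes \mathbb{C}$. The regular elements of a semi-simple Lie algebra form the non-vanishing locus of a polynomial with real coefficients, namely a suitable coefficient of the characteristic polynomial of the adjoint map, and an element of $\mathfrak{g}^{\phi}$ that is regular in $\mathfrak{g}_{\mathbb{C}}$ is already regular in $\mathfrak{g}$, this being literally the same condition. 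Hence, if we find some $H \in (\mathfrak{g}_{\mathbb{C}})^{\phi_{\mathbb{C}}}$ regular in $\mathfrak{g}_{\mathbb{C}}$, then that polynomial cannot vanish identically on the real subspace $\mathfrak{g}^{\phi}$, so it is nonzero somewhere on $\mathfrak{g}^{\phi}$, yielding a $\phi$-fixed regular element of $\mathfrak{g}$. From now on $\mathfrak{g}$ is complex semi-simple.

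Next I would invoke the structural fact that $\phi$ stabilizes a Borel subalgebra $\mathfrak{b}$ and a Cartan subalgebra $\mathfrak{h} \subset \mathfrak{b}$: realizing $\phi$ as an automorphism of the adjoint group $\mathrm{Inn}(\mathfrak{g})$, this is Steinberg's theorem on surjective endomorphisms of connected algebraic groups, and it is essentially the content of Theorem 4.5 of Borel--Mostow, which is proved by induction on $\dim \mathfrak{g}$ using that $\mathfrak{g}^{\phi}$ is reductive. Granting this, $\phi$ preserves $\mathfrak{h}$ and $\mathfrak{b}$, hence permutes the positive roots, hence the set $\Pi = \{\alpha_1, \dots, \alpha_r\}$ of simple roots by some permutation $\tau$. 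Let $\varpi_1^{\vee}, \dots, \varpi_r^{\vee} \in \mathfrak{h}$ be the basis dual to $\Pi$, that is $\alpha_j(\varpi_i^{\vee}) = \delta_{ij}$; a direct computation with the relation between $\phi|_{\mathfrak{h}}$ and $\tau$ shows $\phi(\varpi_i^{\vee}) = \varpi_{\tau(i)}^{\vee}$, so that
\begin{equation}
\rho^{\vee} \;:=\; \varpi_1^{\vee} + \cdots + \varpi_r^{\vee}
\end{equation}
is fixed by $\phi$. Finally, for any root $\beta = \sum_i c_i \alpha_i$ we have $\beta(\rho^{\vee}) = \sum_i c_i = \mathrm{ht}(\beta) \neq 0$; hence $\mathrm{ad}(\rho^{\vee})$ vanishes exactly on $\mathfrak{h}$, so $\rho^{\vee}$ is a regular (and semi-simple) element of $\mathfrak{g}$, and we are done.

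The step I expect to be the real obstacle is the structural input of the second paragraph: when the cyclic group generated by $\phi$ has disconnected Zariski closure, for instance when $\phi$ has finite order, the existence of a $\phi$-invariant Borel subalgebra and a $\phi$-invariant Cartan subalgebra inside it does not follow directly from the Borel fixed point theorem, and it is precisely here that the hypothesis that $\phi$ be semi-simple is used, either through Steinberg's theorem or through the Borel--Mostow induction via the reductivity of $\mathfrak{g}^{\phi}$ and a maximal-torus argument. Everything after that is elementary root-system bookkeeping, and the descent from $\mathfrak{g}_{\mathbb{C}}$ back to $\mathfrak{g}$ is a routine Zariski-density argument.
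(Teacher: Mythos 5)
The paper does not actually prove this proposition: it is quoted verbatim as Theorem 4.5 of Borel--Mostow, so there is no internal argument to compare yours against. Judged on its own terms, your reduction is sound where it is explicit: the complexification step is correct ($(\mathfrak{g}_{\mathbb{C}})^{\phi_{\mathbb{C}}}=(\mathfrak{g}^{\phi})_{\mathbb{C}}$, regularity is the nonvanishing of the coefficient of $t^{r}$ in the characteristic polynomial of $\mathrm{ad}$, which is the same over $\mathbb{R}$ and $\mathbb{C}$, and a real polynomial vanishing identically on $\mathfrak{g}^{\phi}$ vanishes on its complexification), and the root-system bookkeeping is correct: if $\phi$ stabilizes a pair $\mathfrak{h}\subset\mathfrak{b}$ it permutes the simple roots, hence fixes $\rho^{\vee}=\sum_i\varpi_i^{\vee}$, and $\rho^{\vee}$ is regular because every root has nonzero height.

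The one point that needs repair is the attribution of the structural input, which, as you say yourself, is where all the content lies. Steinberg's theorem on general surjective endomorphisms of a connected algebraic group only produces a $\sigma$-stable Borel subgroup; it cannot produce a stable maximal torus inside it without further hypotheses, as conjugation by a regular unipotent element shows (such an automorphism preserves a Borel but normalizes no maximal torus, and correspondingly $\exp(\mathrm{ad}\,e)$ with $e$ regular nilpotent fixes no regular element, which is exactly why the semi-simplicity hypothesis cannot be dropped). The statement you actually need is Steinberg's finer result that a \emph{semi-simple} automorphism is quasi-semi-simple, i.e.\ stabilizes a pair $(\mathfrak{b},\mathfrak{h})$, or equivalently the Borel--Mostow induction via the reductivity of $\mathfrak{g}^{\phi}$; your closing paragraph acknowledges this, but the parenthetical in the second paragraph citing the general surjective-endomorphism theorem is inaccurate as stated. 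Be aware also that appealing to ``the content of Theorem 4.5 of Borel--Mostow'' at this point is circular, since that theorem is precisely the proposition under discussion; to make the argument non-circular you should rest the stable-pair step on Steinberg's quasi-semi-simplicity theorem (or prove the induction yourself), after which your $\rho^{\vee}$ argument and the descent to the real form complete a legitimate proof.
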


The proof of the following proposition can be found in Corollary 12.2.11 and Theorem 12.2.2 of \cite{hilgert-neeb}.

\begin{proposicao}\label{corollary-12-2-11-hilgert-neeb-theorem-12-2-2-hilgert-neeb}
Let $K$ be a compact connected group and $\frak{k}$ be its Lie algebra. If $T$ is a maximal torus, then $Z(T,G) = T$. A subalgebra $\frak{t} \subset \frak{k}$ is maximal abelian if and only if it is the Lie algebra of a maximal torus of G.
For two maximal tori $T$ and $Z$, there exists $h \in K$ such that $T = hZh^{-1}$.
\end{proposicao}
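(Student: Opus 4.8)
The plan is to prove the three assertions in sequence, deriving the conjugacy statement (the heart of the matter) from a fixed point argument and then recovering $Z(T,K)=T$ from conjugacy. First I would record three elementary facts: the closure of an abelian subgroup of a compact group is abelian, a compact connected abelian Lie group is a torus, and $\exp$ maps onto a compact connected Lie group. For the characterization of Lie algebras of maximal tori, suppose $\mathfrak{t}\subset\mathfrak{k}$ is maximal abelian; then $A=\overline{\exp\mathfrak{t}}$ is a compact connected abelian subgroup, hence a torus, and $\mathrm{Lie}(A)$ is abelian and contains $\mathfrak{t}$, so $\mathrm{Lie}(A)=\mathfrak{t}$ and $A$ is a torus with Lie algebra $\mathfrak{t}$; maximality of $\mathfrak{t}$ forces $A$ to be a maximal torus. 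Conversely, if $T$ is a maximal torus then $\exp\mathfrak{t}=T$, and any abelian $\mathfrak{a}\supset\mathfrak{t}$ produces the torus $\overline{\exp\mathfrak{a}}\supset T$, which equals $T$ by maximality, so $\mathfrak{a}\subset\mathrm{Lie}(T)=\mathfrak{t}$.

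Next I would show that $Z(T,K)$ and $N(T,K)$ both have identity component $T$. Since $T$ is a central subgroup of $Z(T,K)$, every $X$ in the Lie algebra of $L:=Z(T,K)^\circ$ commutes with $\mathfrak{t}$, so $\mathfrak{t}+\mathbb{R}X$ is abelian; as $\mathfrak{t}$ is maximal abelian in $\mathfrak{k}$ by the previous paragraph, $X\in\mathfrak{t}$, hence $\mathrm{Lie}(L)=\mathfrak{t}$ and $L=T$ (it is a torus containing $T$). For the normalizer, $N(T,K)^\circ$ acts on $T$ by automorphisms, that is, it maps continuously into $\mathrm{Aut}(T)\cong\mathrm{GL}(n,\mathbb{Z})$, which is discrete by Proposition \ref{torus-discrete}; connectedness makes this action trivial, so $N(T,K)^\circ\subset Z(T,K)$ and thus $N(T,K)^\circ=T$. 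In particular $W:=N(T,K)/T$ is finite and nonempty.

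The core step is to prove that every $g\in K$ is conjugate into $T$, which I would do with a Lefschetz fixed point argument on the compact manifold $K/T$. The translation $\ell_g\colon xT\mapsto gxT$ has a fixed point $xT$ exactly when $x^{-1}gx\in T$, that is, when some conjugate of $g$ lies in $T$; and since $K$ is connected, $\ell_g$ is homotopic to the identity, so its Lefschetz number equals $\chi(K/T)$. The decisive point, which I expect to be the main obstacle, is that $\chi(K/T)\neq0$. To see this, let $T$ act on $K/T$ by left translations; one checks that $(K/T)^T=\{xT:\ x\in N(T,K)\}$, which is finite and nonempty by the previous paragraph, and the standard fact that $\chi(M)=\chi(M^T)$ for a torus action on a compact manifold gives $\chi(K/T)=|W|>0$. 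Hence $\ell_g$ has a fixed point, so $K=\bigcup_{x\in K}xTx^{-1}$. Conjugacy of two maximal tori $T,Z$ then follows by taking a topological generator $z_0$ of $Z$, writing $z_0=xtx^{-1}$ with $t\in T$, and observing $Z=\overline{\langle z_0\rangle}=x\overline{\langle t\rangle}x^{-1}\subset xTx^{-1}$, which is an equality by maximality of $Z$. (An alternative to the Euler characteristic input is to compute the degree of the conjugation map $K/T\times T\to K$ using regular elements and the root space decomposition, but that is more computational.)

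Finally, to upgrade $Z(T,K)^\circ=T$ to $Z(T,K)=T$, take $g\in Z(T,K)$ and set $M=Z(g,K)^\circ$. By the core step, $g$ lies in some maximal torus $S$ of $K$; being connected, $S\subset Z(g,K)^\circ=M$, so $g\in M$, and also $T\subset M$ because $T$ centralizes $g$. Both $S$ and $T$ are maximal tori of the compact connected group $M$, so applying the conjugacy statement inside $M$ yields $m\in M$ with $S=mTm^{-1}$; then $m^{-1}gm\in T$, while $m^{-1}gm=g$ because $m\in M\subset Z(g,K)$, so $g\in T$. This gives $Z(T,K)=T$ and completes the proof.
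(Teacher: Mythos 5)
Your proposal is correct, but note that the paper itself offers no proof of this proposition: it is stated as quoted background, with the proof delegated to Corollary 12.2.11 and Theorem 12.2.2 of \cite{hilgert-neeb}, so the comparison is really with that reference. Your route is the classical topological proof (Hopf--Samelson/Weil) of the maximal torus theorem: the equivalence between maximal abelian subalgebras and Lie algebras of maximal tori, then $Z(T,K)_0=T$ and $N(T,K)_0=T$ (your discreteness-of-$\mathrm{Aut}(T)$ argument is exactly the one the paper itself uses later in Theorem \ref{theo-conjugation-normalizer-torus}), then $\chi(K/T)=|W|>0$ via the fixed-point set of the left $T$-action together with the Lefschetz fixed point theorem to conjugate any element into $T$, and finally the topological-generator trick for conjugacy of tori and the upgrade from $Z(T,K)_0=T$ to $Z(T,K)=T$ inside $M=Z(g,K)_0$. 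All steps check out; in particular there is no circularity in invoking conjugacy inside $M$, since your conjugacy argument is proved uniformly for every compact connected Lie group before being applied to $M$, and the maximality of $S$ and $T$ in $M$ follows from their maximality in $K$. What your approach buys is a self-contained proof of the stronger element-conjugation statement $K=\bigcup_{x\in K}xTx^{-1}$, i.e.\ equation (\ref{eq-introduction-compact-torus}) of the introduction; what it costs is that the two inputs you label as standard --- homotopy invariance of the Lefschetz number on the compact manifold $K/T$ and $\chi(M)=\chi(M^{T})$ for smooth torus actions --- carry essentially all of the depth, and are of comparable weight to the Lie-algebraic machinery (conjugacy of maximal abelian subalgebras of a compact Lie algebra via regular elements) used in the cited source. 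Since the proposition is treated as a quoted preliminary, either route is legitimate; just cite those two topological theorems explicitly rather than folding them into the phrase ``standard fact.''
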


The proof of the following result can be found in item (iii) of Theorem 14.1.3 of \cite{hilgert-neeb}.

\begin{proposicao}\label{theorem-14-1-3-hilgert-neeb}
Let $H$ be a Lie group with finite number of connected components and $K \subset H$ be a maximal compact subgroup. Then, given $U \subset H$ a compact subgroup, there exists $h \in H_0$ such that $hUh^{-1} \subset K$.
\end{proposicao}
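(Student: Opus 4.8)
This is the conjugacy part of the Cartan--Iwasawa--Malcev theorem; the plan is to turn it into a fixed-point statement on the homogeneous space $M := H/K$, settle the semisimple case by nonpositive curvature, and reduce the general case to it via the solvable radical. First I would reduce to producing a point of $M$ fixed by $U$. Since $K$ meets every connected component of $H$ (standard structure theory), we have $H = H_0 K$, so $H_0$ acts transitively on $M$. Writing $o = eK$, if some $m = h_0\cdot o$ with $h_0 \in H_0$ is fixed by $U$, then $uh_0K = h_0K$ for all $u \in U$, i.e.\ $h_0^{-1}Uh_0 \subset K$; taking $h = h_0^{-1} \in H_0$ gives $hUh^{-1}\subset K$, as desired.

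Next, the semisimple case. If $H$ is semisimple with finite center, the Cartan decomposition $\mathfrak{h} = \mathfrak{k}\oplus\mathfrak{p}$ gives a diffeomorphism $K\times\mathfrak{p}\to H$, $(k,X)\mapsto k\exp X$, so $M\cong\exp(\mathfrak{p})$; the restriction of the Killing form to $\mathfrak{p}$ induces an $H$-invariant complete Riemannian metric on $M$ of nonpositive sectional curvature, making $M$ a Hadamard manifold on which $H$ acts by isometries. As $U$ is compact, the orbit $U\cdot o$ is bounded, and Cartan's fixed-point theorem (uniqueness of the circumcenter of a bounded set, equivalently the Bruhat--Tits center-of-mass argument) shows that $U$ fixes its circumcenter. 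The general semisimple case reduces to this one by first pushing $U$ into the centerless adjoint group $\mathrm{Ad}(H)$ and then lifting.

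For general $H$, I would pass to the solvable radical $R$ of $H$: the quotient $H/R$ is semisimple with finitely many components, so by the previous step, after conjugating by a lift to $H_0$ of a suitable element of $(H/R)_0$, we may assume the image of $U$ in $H/R$ lies in a fixed maximal compact subgroup. This reduces us to the case of an extension of a compact group by the solvable group $R$, where the maximal compact subgroups are controlled by the maximal torus $T_R$ of $R$; a cohomological argument --- vanishing of the cohomology of a compact group with coefficients in the vector group $\mathfrak{r}/\mathfrak{t}_R$, equivalently an explicit straightening of the associated cocycle --- then yields a further conjugation, again by an element of the identity component, carrying $U$ into $K$. Composing the two conjugations gives $h \in H_0$ with $hUh^{-1}\subset K$. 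The main obstacle is exactly this last reduction: once the radical is nontrivial, $M = H/K$ need not carry any $H$-invariant metric of nonpositive curvature (this already fails for a non-abelian nilpotent $H$, where $K$ is trivial), so the clean Hadamard-space argument is unavailable and one must invoke the structure theory of solvable Lie groups together with a cohomological splitting; this is the content of item (iii) of Theorem 14.1.3 in \cite{hilgert-neeb}, which we take as given.
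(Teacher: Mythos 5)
The paper does not prove this proposition at all: it is stated as background, with the proof attributed to item (iii) of Theorem 14.1.3 of \cite{hilgert-neeb}. So there is no proof in the paper to compare yours against, and citing that theorem --- as you do at the end --- is exactly what the paper does.

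As a standalone argument, however, your proposal has a genuine gap, and you essentially name it yourself. The reduction to a fixed point of $U$ on $H/K$ (via $H = H_0K$) and the Cartan fixed-point argument on the Hadamard manifold settle only the semisimple, finite-center case. The step that actually covers a general $H$ with finitely many components --- conjugating modulo the solvable radical and then carrying out the cohomological/cocycle-straightening argument for the extension of a compact group by $R$ --- is only gestured at, and you then declare that this remaining step ``is the content of item (iii) of Theorem 14.1.3 in \cite{hilgert-neeb}, which we take as given.'' Taking as given the very statement to be proved makes the argument circular, so what you have is an outline of the standard Cartan--Iwasawa--Malcev proof together with the same citation the paper uses, not a proof. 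Since the paper itself treats the proposition as a quoted result, deferring to the citation is perfectly acceptable here; just present it as such rather than as a proof. (A minor further point: your ``push $U$ into $\mathrm{Ad}(H)$ and lift'' step needs care when the center is infinite, since the preimage in $H$ of a maximal compact subgroup of $\mathrm{Ad}(H)$ is then not compact; the hypothesis of the proposition does not exclude this case.)
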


As far as we know, the following theorem is new. It is the analogous for compact disconnected Lie groups of a result that says that every element in a compact connected Lie group is conjugated to an element of a fixed maximal torus (See Proposition \ref{corollary-12-2-11-hilgert-neeb-theorem-12-2-2-hilgert-neeb} above).

\begin{teorema}\label{theo-conjugation-normalizer-torus}
Let $K$ be a compact Lie group and $T \subset K$ be a maximal torus. Then $N(T,K)_0 = T$ and, for every $g \in K$, there exists $h \in K_0$ such that $hgh^{-1} \in N(T,K)$.
\end{teorema}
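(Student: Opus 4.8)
The plan is to establish the two assertions in turn, the key one being the second, which I would reduce to the statement that every automorphism of a compact connected Lie group preserves some maximal torus.

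First I would treat $N(T,K)_0 = T$. Since $T$ is connected and $T\subset N(T,K)$, we have $T\subset N(T,K)_0$. For the reverse inclusion, conjugation defines a homomorphism $\rho : N(T,K)\to\mathrm{Aut}(T)$, and by Proposition \ref{torus-discrete} the group $\mathrm{Aut}(T)$ is discrete; hence the connected group $N(T,K)_0$ lies in $\ker\rho=Z(T,K)$. Being connected, $N(T,K)_0$ also lies in $K_0$, so $N(T,K)_0\subset Z(T,K)\cap K_0=Z(T,K_0)$. Since $T$ is a maximal torus of the compact connected group $K_0$ as well, Proposition \ref{corollary-12-2-11-hilgert-neeb-theorem-12-2-2-hilgert-neeb} gives $Z(T,K_0)=T$, and the equality follows.

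For the second assertion, fix $g\in K$ and let $\phi=C_g$, which restricts to an automorphism of $K_0$ with differential $\mathrm{Ad}(g)$. Suppose we know $\phi$ preserves some maximal torus $T'$ of $K_0$. By Proposition \ref{corollary-12-2-11-hilgert-neeb-theorem-12-2-2-hilgert-neeb} there is $h\in K_0$ with $h^{-1}Th=T'$, and then
\[
(hgh^{-1})\,T\,(hgh^{-1})^{-1}=h\,\phi(h^{-1}Th)\,h^{-1}=h\,\phi(T')\,h^{-1}=h\,T'\,h^{-1}=T,
\]
so $hgh^{-1}\in N(T,K)$, as wanted. It therefore remains to build the $\phi$-invariant maximal torus $T'$. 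I would pass to the Lie algebra: write $\mathfrak{k}=\mathfrak{z}(\mathfrak{k})\oplus\mathfrak{s}$ with $\mathfrak{s}=[\mathfrak{k},\mathfrak{k}]$ semisimple, noting that both summands are characteristic and hence $\mathrm{Ad}(g)$-invariant. Since $K$ is compact there is an $\mathrm{Ad}(K)$-invariant inner product on $\mathfrak{k}$, so $\mathrm{Ad}(g)|_{\mathfrak{s}}$ is orthogonal, hence semisimple (diagonalizable over $\mathbb{C}$); by Proposition \ref{theorem-4-5-borel-mostow} it fixes a regular element $H\in\mathfrak{s}$. By Proposition \ref{theorem-6-1-18-hilgert-neeb-lemma-12-2-1-hilgert-neeb} the centralizer $\mathfrak{h}$ of $H$ in $\mathfrak{s}$ is a Cartan subalgebra, hence maximal abelian; as $\mathrm{Ad}(g)H=H$ the subspace $\mathfrak{h}$, and with it $\mathfrak{t}'=\mathfrak{z}(\mathfrak{k})\oplus\mathfrak{h}$, is $\mathrm{Ad}(g)$-invariant, and $\mathfrak{t}'$ is maximal abelian in $\mathfrak{k}$. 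By Proposition \ref{corollary-12-2-11-hilgert-neeb-theorem-12-2-2-hilgert-neeb}, $\mathfrak{t}'$ is the Lie algebra of a maximal torus $T'$ of $K_0$; since $\phi(T')$ is again a maximal torus and its Lie algebra is $\mathrm{Ad}(g)\mathfrak{t}'=\mathfrak{t}'$, while a maximal torus is determined by its Lie algebra (being the closure of $\exp(\mathfrak{t}')$), we conclude $\phi(T')=T'$.

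The hard part is this last construction of a $\phi$-invariant maximal torus: one must split off the center, check that $\mathrm{Ad}(g)$ acts semisimply on the semisimple factor so that the Borel--Mostow fixed regular element is available, and then translate the invariance of the resulting maximal abelian subalgebra back to the group level; the rest is routine use of the cited structure theory together with the reduction in the previous paragraph.
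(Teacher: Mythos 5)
Your proof is correct and follows essentially the same route as the paper: discreteness of $\mathrm{Aut}(T)$ plus $Z(T,K_0)=T$ for the first claim, and for the second the decomposition $\mathfrak{k}=\mathfrak{z}(\mathfrak{k})\oplus[\mathfrak{k},\mathfrak{k}]$, the Borel--Mostow fixed regular element, the centralizer Cartan subalgebra, and conjugacy of maximal tori. Your explicit justification that $\mathrm{Ad}(g)$ acts semisimply on the semisimple factor (via an invariant inner product) is a detail the paper merely asserts, but the argument is otherwise the same.
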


\begin{proof}
Consider the map $\theta: N(T,K)_0 \to \mbox{Aut}(T)$, given by $\theta(g) = C_g|_T$. Since $\theta$ is continuous and $N(T,K)_0$ is connected, it follows that the image $\mbox{Im}(\theta)$ is also connected. Since $\mbox{Aut}(T)$ is discrete and $\mbox{Id}_T =  \theta(1) \in \mbox{Im}(\theta)$, it follows that $\mbox{Im}(\theta) = \{\mbox{Id}_T\}$. This implies that $N(T,K)_0 \subset Z(T,K)$. Since $N(T,K)_0 \subset K_0$, it follows that
\begin{equation}
N(T,K)_0 \subset Z(T,K) \cap K_0 = Z(T,K_0) = T
\end{equation}
where we used Proposition \ref{corollary-12-2-11-hilgert-neeb-theorem-12-2-2-hilgert-neeb} in the last equality. On the other hand, it is immediate that $T \subset N(T,K)_0$. 

For the second claim, let $g \in K$ and note that, since $K$ is compact, its Lie algebra $\frak{k}$ is reducible. Thus
\begin{equation}
\frak{k} = \frak{z}(\frak{k}) \oplus [\frak{k},\frak{k}]
\end{equation}
where $\frak{z}(\frak{k})$ is the center of $\frak{k}$ and $\frak{s} = [\frak{k},\frak{k}]$ is a semi-simple ideal of $\frak{k}$. Since $K$ is compact, it follows that $\mbox{Ad}(g)$ is a semi-simple automorphism of $\frak{s}$. By Proposition \ref{theorem-4-5-borel-mostow}, there exists a regular element $H \in \frak{s}$ such that $\mbox{Ad}(g)H = H$. By Proposition \ref{theorem-6-1-18-hilgert-neeb-lemma-12-2-1-hilgert-neeb}, the centralizer $\frak{h}$ of $H$ in $\frak{s}$ is a Cartan subalgebra of $\frak{s}$, such that $\mbox{Ad}(g)\frak{h} = \frak{h}$. Hence $\frak{z} = \frak{z}(\frak{k}) \oplus \frak{h}$ is a Cartan subalgebra of $\frak{k}$ such that $\mbox{Ad}(g)\frak{z} = \frak{z}$. It follows that
\begin{eqnarray}
gZg^{-1}
& = & g\langle \exp(\frak{z}) \rangle g^{-1} \\
& = & \langle \exp(\mbox{Ad}(g)\frak{z}) \rangle \nonumber \\
& = & \langle \exp(\frak{z}) \rangle \nonumber \\
& = & Z
\end{eqnarray}
By Propositions \ref{theorem-6-1-18-hilgert-neeb-lemma-12-2-1-hilgert-neeb} and \ref{corollary-12-2-11-hilgert-neeb-theorem-12-2-2-hilgert-neeb}, we have that $Z = \langle \exp(\frak{z}) \rangle$ is a maximal torus of $K_0$ and there exists $h \in K_0$ such that $T = hZh^{-1}$. It follows that
\begin{eqnarray}
h^{-1}Th
& = & Z \\
& = & gZg^{-1} \nonumber \\
& = & gh^{-1}Thg^{-1}
\end{eqnarray}
which implies that
\begin{equation}
hgh^{-1}T\left(hgh^{-1}\right)^{-1} = T
\end{equation}
showing that $hgh^{-1} \in N(T,K)$.
\end{proof}

\begin{corolario}\label{cor-conjugation-normalizer-torus}
Let $H$ be a Lie group with finite number of connected components, $K \subset H$ be a maximal compact subgroup, $T \subset K$ be a maximal torus, and $X$ be the union of all compact subgroups of $H$. Then
\begin{equation}
X = \bigcup_{g \in H_0} gN(T,K)g^{-1}
\end{equation}
\end{corolario}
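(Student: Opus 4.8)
The plan is to prove the two inclusions separately, using Theorem \ref{theo-conjugation-normalizer-torus} together with the maximal compact subgroup machinery of Proposition \ref{theorem-14-1-3-hilgert-neeb}.

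For the inclusion $\bigcup_{g \in H_0} gN(T,K)g^{-1} \subset X$, I would first observe that $N(T,K)$ is a compact subgroup of $H$: it is closed in $K$ (being the preimage under $g \mapsto C_g|_T$ of a point, or simply because it is a closed subgroup), and $K$ is compact. Then each conjugate $gN(T,K)g^{-1}$ with $g \in H_0$ is again a compact subgroup of $H$, hence contained in $X$ by definition of $X$ as the union of all compact subgroups. Taking the union over $g \in H_0$ gives one inclusion.

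For the reverse inclusion $X \subset \bigcup_{g \in H_0} gN(T,K)g^{-1}$, I would take an arbitrary element $x$ lying in some compact subgroup $U \subset H$. By Proposition \ref{theorem-14-1-3-hilgert-neeb}, there exists $h_1 \in H_0$ such that $h_1 U h_1^{-1} \subset K$; in particular $y := h_1 x h_1^{-1} \in K$. Now apply the second claim of Theorem \ref{theo-conjugation-normalizer-torus} to the compact Lie group $K$ and its maximal torus $T$: there exists $h_2 \in K_0$ such that $h_2 y h_2^{-1} \in N(T,K)$. Since $K_0 \subset H_0$, the element $g := h_2 h_1$ lies in $H_0$, and $g x g^{-1} = h_2 y h_2^{-1} \in N(T,K)$, i.e. $x \in g^{-1} N(T,K) g$. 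Since $g^{-1} \in H_0$ as well, this shows $x \in \bigcup_{g \in H_0} gN(T,K)g^{-1}$, completing the proof.

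I expect the only real subtlety to be bookkeeping about which ambient group the various elements live in — ensuring $K_0 \subset H_0$ (true because $K_0$ is connected and contains the identity) and that the conjugating elements can all be absorbed into a single element of $H_0$. Everything else is a direct concatenation of the two cited results; no estimates or limiting arguments are needed. One should also note that $X$ is well-defined and that $N(T,K)$ is genuinely one of the compact subgroups being unioned, which is why the first inclusion is immediate.
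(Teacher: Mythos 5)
Your proof is correct and follows essentially the same route as the paper: conjugate an arbitrary compact subgroup into $K$ via Proposition \ref{theorem-14-1-3-hilgert-neeb}, then apply Theorem \ref{theo-conjugation-normalizer-torus} inside $K$ and compose the two conjugating elements in $H_0$. The only addition is your explicit check of the easy inclusion (that $N(T,K)$ is compact, hence each conjugate lies in $X$), which the paper leaves implicit; just phrase its compactness as ``closed subgroup of the compact group $K$'' rather than as a preimage under $g \mapsto C_g|_T$, since that map is only defined on the normalizer itself.
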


\begin{proof}
By Proposition \ref{theorem-14-1-3-hilgert-neeb}, given $U \subset H$ a compact subgroup, there exists $h \in H_0$ such that $hUh^{-1} \subset K$. Thus, for each $u \in U$, we have that $huh^{-1} \in K$. By Theorem \ref{theo-conjugation-normalizer-torus}, there exists $k \in K_0$ such that $khuh^{-1}k^{-1} \in N(T,K)$, which implies that $u \in gN(T,K)g^{-1}$, where $g = h^{-1}k^{-1} \in H_0$.
\end{proof}

\section{Topological entropy of powers}\label{topological}

The first result of this main section determines the topological entropy of powers on compact Lie groups whose connected component of the identity is a torus.

\begin{proposicao}\label{prop-entropy-disjoint-tori}
Let $N$ be a Lie group with finite number of connected components such that $N_0 = T$ is a torus and $\psi : N \to N$ be the power map with exponent $k$. Then 
\begin{equation}
h\left(\psi\right) = \dim(T)\log(k)
\end{equation}
where $\dim(T)$ is the dimension of $T$.
\end{proposicao}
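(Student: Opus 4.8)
The plan is to reduce the computation on the disconnected group $N$ to the connected component $T$, where the power map is a genuine torus endomorphism, and then to invoke Proposition~\ref{corollaries-11-16-bowen-endomorphis} together with the structural decomposition results of Section~\ref{Ergodic-Theory}. First I would observe that $N$ has finitely many connected components, say $N = \bigsqcup_{i=1}^{m} N_i$ with $N_1 = T = N_0$, and each $N_i$ is a coset of $T$, hence compact. The power map $\psi(g) = g^k$ need not preserve each component, but it does permute them: since $N/T$ is a finite group, $\psi$ induces the $k$-th power map on $N/T$, so there is some positive integer $r$ (e.g. the exponent or order of $N/T$) with $\psi^r(N_i) \subset T$ for every $i$. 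Actually more is true: $\psi^r$ maps every component into $T$, and on $T$ itself $\psi$ restricts to the endomorphism $t \mapsto t^k$ of the torus.

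The key steps, in order, are: (1) show $\psi^r(N) \subset T$ for a suitable $r$ and hence that $\mathcal R_\psi \subset T$ — indeed any recurrent point must lie in a component fixed by some power of $\psi$, and iterating brings it into $T$; alternatively, use Proposition~\ref{prop-disjoint-union} directly on a $\psi^r$-invariant decomposition. (2) Compute $h(\psi|_T)$: the differential of $t \mapsto t^k$ at the identity is $k \cdot \mathrm{Id}$ on $\mathfrak t \cong \mathbb R^{\dim T}$, all of whose eigenvalues equal $k > 1$ (using that $k$ is a positive integer; the degenerate case $k=1$ gives entropy $0 = \dim(T)\log 1$), so Proposition~\ref{corollaries-11-16-bowen-endomorphis} yields $h(\psi|_T) = \dim(T)\log(k)$. (3) Get the lower bound $h(\psi) \geq h(\psi|_T) = \dim(T)\log(k)$ since $T$ is a $\psi$-invariant closed (hence locally compact) subset and topological entropy is monotone under restriction to invariant subsets. (4) Get the upper bound: since $N$ is compact and $\psi^r(N)\subset T$, every $\psi$-invariant Radon probability measure $\mu$ is supported (up to measure zero) on the union of components fixed by powers of $\psi$; applying Proposition~\ref{proposition-2-1-8-ferraiol-dissertacao-remark-2-20-caldas-patrao:entropia} to the $\psi$-invariant set $T$ and noting $\psi|_{N\setminus T}$ has empty recurrent set (by the Poincaré recurrence argument as in Corollary~\ref{cor-variational-principle}), one gets $h_\mu(\psi) \leq h_{\mu_T}(\psi|_T) \leq h(\psi|_T)$ for all $\mu$, hence $h(\psi) = \sup_\mu h_\mu(\psi) \leq h(\psi|_T)$ by Proposition~\ref{theorem-3-1-caldas-patrao:entropia}.

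The main obstacle I anticipate is step (4) — the upper bound — specifically handling the components that $\psi$ does \emph{not} map into $T$ and making the measure-theoretic reduction airtight. The clean way is: decompose $N$ as a disjoint union of $\psi$-invariant clopen sets according to the orbit structure of $\psi$ on $\pi_0(N)$, apply Proposition~\ref{prop-disjoint-union} so that $h(\psi) = \max_i h(\psi|_{Y_i})$ over these pieces $Y_i$, and then on each piece use that a suitable iterate $\psi^{r_i}$ maps $Y_i$ into $T$, combined with $h(\psi^{r_i}|_{Y_i}) = r_i\, h(\psi|_{Y_i})$ from Proposition~\ref{proposition-2-1-8-ferraiol-dissertacao-remark-2-20-caldas-patrao:entropia} (in its measure-entropy form, via the variational principle) and the fact that $\psi^{r_i}|_{Y_i}$ factors through the compact set $T$, forcing $h(\psi^{r_i}|_{Y_i}) \leq h(\psi^{r_i}|_T) = r_i \dim(T)\log(k)$. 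A minor technical point to be careful about is the edge case $k = 1$, where $\psi = \mathrm{Id}$ and both sides vanish, so the formula still holds trivially.
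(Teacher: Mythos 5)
Your lower bound and the computation of $h(\psi|_T)$ are fine, but the reduction you rely on for the upper bound contains a genuine error: it is \emph{not} true that some iterate $\psi^r$ maps every component into $T$, nor that $\mathcal{R}_\psi \subset T$. On the quotient $F = N/T$ the induced map is $x \mapsto x^{k}$, and an element whose order is coprime to $k$ (and $>1$) is periodic under this map and never reaches the identity. Concretely, take $N = S^1 \times \mathbb{Z}/3\mathbb{Z}$ and $k=2$: the point $g=(1,\bar 1)$ satisfies $\psi^2(g)=g$, so it is recurrent but lies outside $T = S^1\times\{\bar 0\}$, and no iterate of $\psi$ maps the component $S^1\times\{\bar 1\}$ into $T$. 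Thus both versions of your step (1)/(4) — ``$\psi^{r}(N_i)\subset T$'' and ``$\psi^{r_i}|_{Y_i}$ factors through $T$, forcing $h(\psi^{r_i}|_{Y_i}) \leq h(\psi^{r_i}|_T)$'' — fail, and with them the whole upper bound, since the non-identity cosets can carry entropy equal to that of $T$ (in the example above the $\psi$-invariant piece $S^1\times\{\bar 1\}\cup S^1\times\{\bar 2\}$ has entropy $\log 2$).

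The part you are missing is precisely where the real work lies. The correct reduction is to the preimage $\pi^{-1}(\Gamma)$, where $\Gamma\subset N/T$ is the eventual (periodic) image of the power map on the finite quotient: one shows $\mathcal{R}_\psi\subset\pi^{-1}(\Gamma)$, picks $n$ with $\Psi^n|_\Gamma=\mathrm{Id}$, and applies Proposition \ref{prop-disjoint-union} to the disjoint $\psi^n$-invariant cosets $gT$ over $\Gamma$. On such a coset $\psi^n$ is \emph{not} a torus endomorphism but an affine map: writing $\phi = C_{g^{-1}}|_T$ one gets $g^{-1}\psi^n(gt) = g^{k^n-1}\varphi(t)$ with $\varphi(t)=\phi^{k^n-1}(t)\cdots\phi(t)\,t$ an endomorphism of $T$. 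Proposition \ref{corollaries-11-16-bowen-endomorphis} (which covers translations composed with endomorphisms) gives $h(\psi^n|_{gT})=\sum_\lambda\log|\lambda|$ over the eigenvalues of $d\varphi_1 = d\phi_1^{k^n-1}+\cdots+d\phi_1+\mathrm{Id}$, and the key point is that $\phi$ has finite order (the conjugation action of the finite group $N/T$ on $T$), so the eigenvalues of $d\phi_1$ have modulus $1$ and $|\lambda|\le k^n$; hence $h(\psi^n|_{gT})\le\dim(T)\log(k^n)$ for every coset, with equality on $T$ itself, and dividing by $n$ gives the formula. Your proposal never confronts these affine maps on the non-identity cosets, so as written it proves only the lower bound.
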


\begin{proof}
Consider the map $\Psi : N/T \to N/T$, given by $\Psi\left(\pi(g)\right) = \pi\left(\psi(g)\right)$, where $\pi : N \to N/T$ is the canonical projection. Since we have the following sequence
\begin{equation}
\cdots \subset \Psi^2\left(G/T\right) \subset \Psi\left(G/T\right) \subset G/T 
\end{equation}
and since $G/T$ is finite, there exists $j$ such that $\Psi\left(\Gamma\right) = \Gamma$ if $\Gamma = \Psi^j\left(G/T\right)$. Since $\Gamma$ is finite, it follows that $\Psi|_\Gamma$ is a bijection of $\Gamma$ and hence there exists a minimal natural number $n$ such that $\Psi^n|_\Gamma = \mathrm{Id}_\Gamma$. Furthermore, we have that $\mathcal{R}_\Psi \subset \Gamma$, which implies that $\mathcal{R}_\psi \subset \pi^{-1}\left(\Gamma\right)$, since $\pi\left(\mathcal{R}_\psi\right) \subset \mathcal{R}_\Psi$ by the definitions and by the continuity of the maps. Since $\pi^{-1}\left(\Gamma\right)$ is closed, it follows that $\overline{\mathcal{R}_{\psi}} \subset \pi^{-1}\left(\Gamma\right)$.
We have that
\begin{eqnarray}
h\left(\psi\right)
& = & h\left(\psi|_{\overline{\mathcal{R}_{\psi}}}\right) \nonumber \\
& = & h\left(\psi|_{\pi^{-1}\left(\Gamma\right)}\right) \nonumber \\
& = & \frac{1}{n}h\left(\psi^n|_{\pi^{-1}\left(\Gamma\right)}\right)
\end{eqnarray}
If $\Delta$ is any subset of $N$ such that $\pi$ is a bijection between $\Delta$ and $\Gamma$, then
\begin{equation}
\pi^{-1}\left(\Gamma\right) =  \bigcup_{g \in \Delta} gT
\end{equation}
which is a disjoint union. Since $\Psi^n|_\Gamma = \mathrm{Id}_\Gamma$, it follows that $\psi^n\left(gT\right) = gT$ for $g \in \Delta$, which implies, by Proposition \ref{prop-disjoint-union}, that
\begin{equation}\label{eq-entropy-disjoint-tori-1}
h\left(\psi\right) = \frac{1}{n} \max_{g \in \Delta} h\left(\psi^n|_{gT}\right)
\end{equation}
On the other hand, we have that
\begin{eqnarray}
\psi^n\left(gt\right)
& = & \left(gt\right)^{k^n} \nonumber \\
& = & gtgtgt\cdots gtgtgt \nonumber \\
& = & g^{k^n}g^{1-k^n}tg^{k^n-1}g^{2-k^n}tg^{k^n-2}g^{3-k^n}t\cdots g^3g^{-2}tg^2g^{-1}tgt \nonumber \\
& = & g^{k^n}\phi^{k^n-1}(t)\phi^{k^n-2}(t)\cdots \phi^2(t)\phi(t)t \nonumber \\
\end{eqnarray}
where $\phi : T \to T$ is the automorphism given by $\phi(t) = g^{-1}tg$. Since $\psi^n\left(gt\right) \in gT$, it follows that $g^{k^n} \in gT$ and hence $g^{k^n-1} \in T$. Thus it follows that
\begin{equation}\label{eq-entropy-disjoint-tori-2}
g^{-1}\psi^n\left(gt\right) = g^{k^n-1}\varphi(t)
\end{equation}
where 
\begin{equation}
\varphi(t) = \phi^{k^n-1}(t)\phi^{k^n-2}(t)\cdots \phi^2(t)\phi(t)t
\end{equation}
is an endomorphism of $T$. In fact, we have that
\begin{eqnarray}
\varphi\left(t_1t_2\right)
& = & \phi^{k^n-1}(t_1t_2)\cdots \phi^2(t_1t_2)\phi(t_1t_2)t_1t_2 \nonumber \\
& = & \phi^{k^n-1}(t_1)\phi^{k^n-1}(t_2)\cdots \phi^2(t_1)\phi^2(t_2)\phi(t_1)\phi(t_2)t_1t_2 \nonumber \\
& = & \phi^{k^n-1}(t_1)\cdots \phi^2(t_1)\phi(t_1)t_1\phi^{k^n-1}(t_2)\cdots \phi^2(t_2)\phi(t_2)t_2 \nonumber \\
& = & \varphi\left(t_1\right)\varphi\left(t_2\right)
\end{eqnarray}
where we used that $T$ is abelian. By the conjugation given in equation (\ref{eq-entropy-disjoint-tori-2}) and by Proposition \ref{corollaries-11-16-bowen-endomorphis}, it follows that
\begin{eqnarray}\label{eq-entropy-disjoint-tori-3}
h\left(\psi^n|_{gT}\right)
& = & h\left(L_{g^{k^n-1}} \circ \varphi\right) \nonumber \\
& = & h\left(\varphi\right) \nonumber \\
& = & \sum_{\lambda} \log\left|\lambda\right|
\end{eqnarray}
the summation is taken over all eigenvalues $\lambda$ of $d\varphi_1$ satisfying $|\lambda| > 1$. Since 
\begin{equation}
d\varphi_1 = d\phi^{k^n-1}_1 + \cdots + d\phi^2_1 + d\phi_1 + \mathrm{Id}
\end{equation}
it follows that
\begin{equation}
\lambda = \rho^{k^n-1} + \cdots + \rho^2 + \rho + 1
\end{equation}
where $\rho$ is an eigenvalue of $d\phi_1$. We claim that $\phi$ has finite order. In fact, consider the homomorphism $\theta : N \to \mathrm{Aut}(T)$, given by $\theta(g) = C_g|_T$. Since $\theta$ is continuous, $T$ is connected, $\mathrm{Aut}(T)$ is discrete and $\theta(1) = \mbox{Id}_T$, it follows that $T \subset \ker(\theta)$. Thus we can consider the induced homomorphism $\Theta : N/T \to \mathrm{Aut}(T)$, given by $\Theta(\pi(g)) = \theta(g)$. Since $N/T$ is finite, it follows that $\mathrm{Im}(\Theta)$ is finite, which implies that $\Theta(\pi(g)) = \theta(g) = \phi$ has finite order. It follows that $|\rho| = 1$ and thus
\begin{equation}
|\lambda| \leq |\rho|^{k^n-1} + \cdots + |\rho|^2 + |\rho| + 1 = k^n
\end{equation}
By equation (\ref{eq-entropy-disjoint-tori-3}), it follows that
\begin{equation}
h\left(\psi^n|_{gT}\right) \leq \dim(T)\log(k^n)
\end{equation}
By equation (\ref{eq-entropy-disjoint-tori-1}) and since $\theta(1) = \mbox{Id}_T$, it follows that
\begin{equation}
h\left(\psi\right) = \frac{1}{n} \max_{g \in \Delta} h\left(\psi^n|_{gT}\right) = \dim(T)\log(k)
\end{equation}
completing the proof.
\end{proof}

The next result shows that the recurrent set of powers on a Lie group $G$ lies inside the union of all compact subgroups of $G$.

\begin{proposicao}\label{prop-recurrent-set-compact-subgroups}
Let $G$ be a Lie group and $\psi : G \to G$ be the power map with exponent $k$. Then $\mathcal{R}_{\psi} \subset X$, where $X$ is the union of all compact subgroups of $G$.
\end{proposicao}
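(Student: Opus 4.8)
The plan is to show, for each recurrent point $g \in \mathcal{R}_{\psi}$, that the closure $H := \overline{\langle g\rangle}$ of the cyclic subgroup generated by $g$ is a \emph{compact} subgroup of $G$; since $g \in H$, this puts $g$ in $X$. Fix a sequence $n_j \to \infty$ with $\psi^{n_j}(g) = g^{k^{n_j}} \to g$. The subgroup $H$ is abelian (closure of an abelian subgroup), it satisfies $\psi(H) \subseteq H$ (if $h = \lim g^{m_i}$ then $h^k = \lim g^{k m_i} \in H$), and by Cartan's closed subgroup theorem it is a Lie subgroup of $G$; in particular its identity component $H_0$ is open in $H$, and $\psi(H_0) \subseteq H_0$ by continuity of $\psi$, connectedness of $H_0$, and $\psi(1) = 1$.

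First I would deal with the component group $H/H_0$, which is discrete. Applying the quotient homomorphism $q : H \to H/H_0$ to $g^{k^{n_j}} \to g$ and using discreteness gives $q(g)^{k^{n_j}} = q(g)$ for all large $j$, hence $q(g)^{k^{n_j}-1} = 1$; since $k \geq 2$ and $n_j \to \infty$, some fixed positive integer $k^{n_j}-1$ annihilates $q(g)$, so $q(g)$ has finite order. As $q(g)$ topologically generates $H/H_0$, this group is finite, say of order $N$, and therefore $g^N \in H_0$.

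The heart of the argument is to remove the ``vector part'' of $H_0$. By the structure of connected abelian Lie groups (see \cite{hilgert-neeb}), $H_0 \cong \mathbb{R}^a \times T^c$; let $T_0 \subseteq H_0$ be its maximal torus, which is also the unique maximal compact subgroup of $H_0$ (any compact subgroup projects trivially to the $\mathbb{R}^a$-factor), and let $\sigma : H_0 \to H_0/T_0 \cong \mathbb{R}^a$ be the quotient map. Since $\psi|_{H_0}$ is the $k$-th power endomorphism of the abelian group $H_0$, it descends under $\sigma$ to the dilation $v \mapsto k v$ on $\mathbb{R}^a$, so $\sigma(\psi^n(x)) = k^n \sigma(x)$ for all $x \in H_0$. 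Now $g^N \in H_0$ is itself $\psi$-recurrent, because $\psi^{n_j}(g^N) = (g^{k^{n_j}})^N \to g^N$; applying $\sigma$ yields $k^{n_j}\sigma(g^N) \to \sigma(g^N)$ in $\mathbb{R}^a$, which, as $k^{n_j} \to \infty$, forces $\sigma(g^N) = 0$. Hence $g^N \in T_0$.

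To conclude, write an arbitrary integer $m$ as $m = qN + r$ with $0 \leq r < N$; then $g^m = (g^N)^q g^r \in g^r T_0$, so $\langle g\rangle \subseteq \bigcup_{r=0}^{N-1} g^r T_0$, which is compact. Thus $H = \overline{\langle g\rangle}$ is a compact subgroup of $G$ containing $g$, i.e.\ $g \in X$. I expect the third paragraph to be the only genuine difficulty: one has to isolate the noncompact directions of $H_0$ and recognize that $\psi$ acts on them by an expanding scaling, which is exactly what recurrence cannot tolerate; the component-group step and the final assembly are routine. (The hypothesis $k \geq 2$ is essential here: for $k = 1$ the map $\psi$ is the identity, $\mathcal{R}_{\psi} = G$, and the statement fails already for $G = \mathbb{R}$.)
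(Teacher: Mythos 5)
Your proof is correct and follows the same overall strategy as the paper: for a recurrent point $g$ you show that $A=\overline{\langle g\rangle}$ is a compact subgroup, by decomposing its identity component as $\mathbb{R}^a\times T^c$ (the paper writes $A_0=V\times T$), proving that some power $g^N$ lies in the torus factor, and then covering $A$ by finitely many cosets of that torus. The two middle steps are executed differently, though. To place $g^N$ in $A_0$, the paper uses $g^{m_j}\to 1$ with $m_j=k^{n_j}-1$ together with openness of $A_0$ in $A$, while you pass to the discrete component group and deduce $q(g)^{k^{n_j}-1}=1$, so $A/A_0$ is finite; both work. To kill the vector part, the paper writes $g^N=\exp(Y+Z)$ and runs a subsequence/compactness argument on $\exp(q_jY)\exp(q_jZ)g^{r_j}$ to force $Y=0$, whereas you apply the quotient homomorphism $\sigma: A_0\to A_0/T_0\cong\mathbb{R}^a$, under which $\psi^{n}$ becomes multiplication by $k^{n}$, and let recurrence of $g^N$ force $\sigma(g^N)=0$; this is cleaner and avoids the explicit use of exponential coordinates and subsequence extraction. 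Your closing caveat is also apt: for $k=1$ one has $m_j=0$, the paper's argument degenerates, and the statement itself fails (e.g.\ $G=\mathbb{R}$), so the proposition implicitly requires $k\geq 2$, which is harmless for the main theorem since the case $k=1$ is trivial there.
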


\begin{proof}
If $g \in \mathcal{R}_{\psi}$, then there exists $n_j \to \infty$ such that $\psi^{n_j}(g) \to g$. Defining $m_j = k^{n_j} - 1$, it follows that 
\begin{equation}
g^{m_j} = g^{k^{n_j} - 1} = \psi^{n_j}(g)g^{-1} \to 1
\end{equation}
since $\psi^{n_j}(g) = g^{k^{n_j}}$. Now let $A$ be the closure of the subgroup generated by $g$. It follows that $A$ is closed abelian subgroup of $G$. Hence $A_0 = V \times T$, where $V$ is an euclidean space and $T$ is the maximal torus of $A$. Since $g^{m_j} \to 1$, there exists $N$ such that $g^N \in A_0$. Thus $g^N = \exp(Y+Z)$, with $Y \in \frak{v}$ and $Z \in \frak{t}$, where $\frak{v}$ and $\frak{t}$ are the Lie algebras of respectively $V$ and $T$. Writing $m_j = Nq_j + r_j$, where $0 \leq r_j < N$, it follows that $q_j \to \infty$. Thus
\begin{equation}
g^{m_j} = \left(g^N\right)^{q_j}g^{r_j} = \exp\left(q_jY\right)\exp\left(q_jZ\right)g^{r_j}
\end{equation}
We have that 
\begin{equation}
\exp\left(q_jZ\right)g^{r_j} \in \bigcup_{r = 0}^{N-1} (0 \times T)g^r
\end{equation}
which is compact. Thus there exists a subsequence $\exp\left(q_{j_l}Z\right)g^{r_{j_l}} \to h$ and hence
\begin{equation}
\exp\left(q_{j_l}Y\right) = g^{m_{j_l}}\left(\exp\left(q_{j_l}Z\right)g^{r_{j_l}}\right)^{-1} \to h^{-1}
\end{equation}
Since $q_{j_l} \to \infty$, this implies that $Y = 0$, which implies that $g^N = \exp(Z) \in 0 \times T$ and that $g^{-N} = \exp(-Z) \in 0 \times T$. Hence 
\begin{equation}
A \subset \bigcup_{r = -N+1}^{N-1} (0 \times T)g^r
\end{equation}
showing that $A$ is compact.
\end{proof}

The next result shows that the computation of the topological entropy of powers on Lie groups with finite number of connected components reduces to the computation on Lie groups whose connected component of the identity is a torus.

\begin{proposicao}\label{prop-entropy-finite-components}
Let $H$ be a Lie group with finite number of connected components, $\psi : H \to H$ be the power map with exponent $k$, $K \subset H$ be a maximal compact subgroup, $X$ be the union of all compact subgroups of $H$, and
\begin{equation}
H = \bigcup_{l=0}^\infty B_l
\end{equation}
where each $B_l$ is a compact subset and $B_l \subset B_{l+1}$ for each $l$. Then
\begin{equation}
X = \bigcup_{l=0}^\infty C_l
\end{equation}
where
\begin{equation}
C_l = \bigcup_{g \in B_l} gN(T,K)g^{-1}
\end{equation}
is a $\psi$-invariant compact subset and $C_l \subset C_{l+1}$ for each $l$, and 
\begin{equation}
h\left(\psi\right) = h\left(\psi|_{N(T,K)}\right)
\end{equation}
\end{proposicao}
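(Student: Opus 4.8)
The plan is to establish the three claims in turn: that $X = \bigcup_l C_l$ with each $C_l$ compact and $\psi$-invariant and nested, and finally the entropy formula $h(\psi) = h(\psi|_{N(T,K)})$. For the decomposition, the key input is Corollary \ref{cor-conjugation-normalizer-torus}, which gives $X = \bigcup_{g \in H_0} gN(T,K)g^{-1}$. Since $H$ has finitely many components and $H = \bigcup_l B_l$ with $B_l$ compact and nested, and since $H_0 = \bigcup_l (B_l \cap H_0)$, I would take $C_l = \bigcup_{g \in B_l} gN(T,K)g^{-1}$; replacing $g$ ranging over $H_0$ by $g$ ranging over $B_l \cap H_0$ does not change the union over all $l$ because $gN(T,K)g^{-1} = (gh_0)N(T,K)(gh_0)^{-1}$ for appropriate $h_0 \in N(T,K) \subset K_0 \subset H_0$ only when $h_0$ normalizes — more carefully, one simply notes $\bigcup_l C_l = \bigcup_{g \in H}\, gN(T,K)g^{-1} \supseteq \bigcup_{g \in H_0} gN(T,K)g^{-1} = X$, while each $gN(T,K)g^{-1}$ is a compact subgroup hence contained in $X$, giving equality. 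Compactness of $C_l$ follows since it is the image of the compact set $B_l \times N(T,K)$ under the continuous conjugation map $(g,x)\mapsto gxg^{-1}$; nestedness $C_l \subset C_{l+1}$ is immediate from $B_l \subset B_{l+1}$.

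The $\psi$-invariance of $C_l$ is where a small computation is needed. For $g \in B_l$ and $x \in N(T,K)$, we have $\psi(gxg^{-1}) = (gxg^{-1})^k = g x^k g^{-1}$, and $x^k \in N(T,K)$ since $N(T,K)$ is a subgroup; hence $\psi(C_l) \subset C_l$. (Note this uses crucially that $\psi$ commutes with conjugations, the symmetry emphasized in the introduction.) So each $C_l$ is $\psi$-invariant and compact, and $X = \bigcup_l C_l$ with $C_l \subset C_{l+1}$.

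For the entropy formula, combine Proposition \ref{prop-recurrent-set-compact-subgroups}, which gives $\mathcal{R}_\psi \subset X$, with Corollary \ref{cor-variational-principle} applied to $Y = H$ (a separable metric space) and the exhaustion $X = \bigcup_l C_l$: this yields $\sup_\mu h_\mu(\psi) = \lim_{l\to\infty} h(\psi|_{C_l})$. On the other hand, $\psi$ on $H$ has the same topological entropy as $\psi$ restricted to (a neighborhood of) its recurrent set in the locally compact setting, and by Proposition \ref{theorem-3-1-caldas-patrao:entropia} this supremum equals $h(\psi)$; so $h(\psi) = \lim_l h(\psi|_{C_l})$. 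It then remains to compute $h(\psi|_{C_l})$. Since $C_l = \bigcup_{g\in B_l} gN(T,K)g^{-1}$ is a continuous surjective image of $B_l \times N(T,K)$ under the map $c(g,x) = gxg^{-1}$, and the diagram with $\Psi(g,x) = (g, x^k)$ commutes ($c \circ \Psi = \psi \circ c$), the standard fact that entropy does not increase under continuous semiconjugacy with compact domain gives $h(\psi|_{C_l}) \le h(\Psi|_{B_l \times N(T,K)})$. But $\Psi$ acts as the identity on the compact factor $B_l$ (here I should first replace $B_l$ by a $\psi$-invariant compact set, e.g. pass to $\bigcap_j \psi^j(\cdots)$ — actually $\Psi$ fixes the first coordinate so no replacement is needed), so $h(\Psi|_{B_l \times N(T,K)}) = h(\psi|_{N(T,K)})$ by the product formula for entropy (Proposition \ref{corollaries-11-16-bowen-endomorphis}-style reasoning, or directly since entropy of a product with an identity factor is the entropy of the nontrivial factor). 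Conversely, $N(T,K) \subset C_l$ for $l$ large enough (take $1 \in B_l$), and $N(T,K)$ is $\psi$-invariant, so $h(\psi|_{N(T,K)}) \le h(\psi|_{C_l})$. Hence $h(\psi|_{C_l}) = h(\psi|_{N(T,K)})$ for all large $l$, and passing to the limit gives $h(\psi) = h(\psi|_{N(T,K)})$.

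The main obstacle I anticipate is the subadditivity-under-semiconjugacy step in a form valid for the non-locally-compact space $X$ and its compact exhaustion: one must be careful to run the inequality $h(\psi|_{C_l}) \le h(\Psi|_{B_l\times N(T,K)})$ entirely within the compact sets $C_l$ and $B_l \times N(T,K)$, where classical topological entropy and its semiconjugacy monotonicity apply, and only afterwards take $l \to \infty$. A secondary point requiring care is checking that $N(T,K)$ is genuinely $\psi$-invariant and that $N(T,K)_0 = T$ (from Theorem \ref{theo-conjugation-normalizer-torus}) so that, together with Proposition \ref{prop-entropy-disjoint-tori} applied to $N = N(T,K)$, the value $h(\psi|_{N(T,K)}) = \dim(T)\log(k)$ — though that last identification is really the content of the next step in the paper rather than of this proposition.
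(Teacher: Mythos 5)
Your proposal is correct and follows essentially the same route as the paper: compactness of $C_l$ via the continuous surjection $c(g,x)=gxg^{-1}$ from $B_l\times N(T,K)$, $\psi$-invariance from $\psi$ commuting with conjugation, the identification $X=\bigcup_l C_l$ via Corollary \ref{cor-conjugation-normalizer-torus}, the variational principles (Propositions \ref{theorem-3-1-caldas-patrao:entropia}, \ref{prop-recurrent-set-compact-subgroups} and Corollary \ref{cor-variational-principle}) to get $h(\psi)=\lim_l h(\psi|_{C_l})$, and the semiconjugacy with $\Psi(g,x)=(g,\psi(x))$ on the compact product to pin $h(\psi|_{C_l})=h(\psi|_{N(T,K)})$. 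The only cosmetic differences are your slightly roundabout justification of $\bigcup_l C_l = X$ (your final one-line argument is exactly the paper's) and your citation gesture for $h(\Psi)=h(\psi|_{N(T,K)})$, which the paper simply asserts.
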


\begin{proof}
Since $c : B_l \times N(T,K) \to C_l$, given by $c(g,t) = gtg^{-1}$, is a continuous and surjective map and since $B_l \times N(T,K)$ is compact, it follows that $C_l$ is compact. We also have that $C_l \subset C_{l+1}$ for each $l$, since $B_l \subset B_{l+1}$ for each $l$, and that $C_l$ is $\psi$-invariant, since
\begin{eqnarray}
\psi\left(C_l\right)
& = & \bigcup_{g \in B_l} \psi\left(gN(T,K)g^{-1}\right) \\
& = & \bigcup_{g \in B_l} g\psi\left(N(T,K)\right)g^{-1} \nonumber \\
& = & \bigcup_{g \in B_l} gN(T,K)g^{-1} \nonumber \\
& = & C_l
\end{eqnarray}
By Corollary \ref{cor-conjugation-normalizer-torus}, we have that
\begin{equation}
X = \bigcup_{g \in H} gN(T,K)g^{-1}
\end{equation}
which implies that
\begin{equation}
X = \bigcup_{l=0}^\infty \bigcup_{g \in B_l} gN(T,K)g^{-1}
\end{equation}
since
\begin{equation}
H = \bigcup_{l=0}^\infty B_l
\end{equation}
By Proposition \ref{theorem-3-1-caldas-patrao:entropia}, Proposition \ref{prop-recurrent-set-compact-subgroups}, and Corollary \ref{cor-variational-principle}, it follows that
\begin{equation}\label{eq-entropy-finite-components}
h\left(\psi\right) = \sup_{\mu} h_\mu\left(\psi\right) = \lim_{l \to \infty} h\left(\psi|_{C_l}\right)
\end{equation}
We also have that
\begin{equation}
\psi\left(c(g,t)\right) = c\left(g,\psi(t)\right)
\end{equation}
which means that the map $c$ is a semi-conjugation between the map $\psi|_{C_l}$ and the map $\Psi : B_l \times N(T,K) \to B_l \times N(T,K)$, given by $\Psi\left(g,t\right) = \left(g,\psi(t)\right)$. Since $c$ is a continuous and surjective map and since $B_l \times N(T,K)$ is compact, it follows that
\begin{eqnarray}
h\left(\psi|_{N(T,K)}\right)
& \leq & h\left(\psi|_{C_l}\right) \nonumber \\
& \leq & h\left(\Psi\right) \nonumber \\
& = & h\left(\psi|_{N(T,K)}\right)
\end{eqnarray}
showing that
\begin{equation}
h\left(\psi|_{C_l}\right) = h\left(\psi|_{N(T,K)}\right)
\end{equation}
and thus that
\begin{equation}
h\left(\psi\right) = h\left(\psi|_{N(T,K)}\right)
\end{equation}
where we used equation (\ref{eq-entropy-finite-components}).
\end{proof}

The next result shows how to reduce the general case to the previous one.

\begin{proposicao}\label{prop-entropy-powers}
Let $G$ be a Lie group and $\psi : G \to G$ be the power map with exponent $k$. Then 
\begin{equation}
h\left(\psi\right) = \sup_{H} h\left(\psi|_H\right)
\end{equation}
where the supremum is taken over all open subgroups $H$ of $G$ with finite number of connected components. 
\end{proposicao}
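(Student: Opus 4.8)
The plan is to exhibit, for an arbitrary Lie group $G$, an exhaustion of $G$ by open subgroups with finitely many connected components, and then to argue that the topological entropy of $\psi$ is the supremum of the entropies of its restrictions to these subgroups. First I would invoke the standard fact that a Lie group $G$ has at most countably many connected components (it is second countable, or at least $\sigma$-compact once one restricts to the identity component times a countable discrete set of components), and that the connected component $G_0$ is an open subgroup. For each finite set $F$ of components one obtains an open subgroup $H_F$ generated by $G_0$ together with representatives of the components in $F$; enlarging $F$ along a countable cofinal chain $F_1 \subset F_2 \subset \cdots$ with $\bigcup_n F_n$ exhausting all components gives an increasing chain of open subgroups $H_1 \subset H_2 \subset \cdots$ with $\bigcup_n H_n = G$, each $H_n$ having finitely many connected components (namely those in the subgroup generated by $F_n$, which is finite since $G/G_0$ is a group and the subgroup generated by finitely many elements of a — possibly infinite — discrete group could in principle be infinite). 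This last parenthetical is exactly where care is needed: the subgroup of $G/G_0$ generated by finitely many elements need not be finite. To fix this, rather than "generated by", I would simply take $H_n = \pi^{-1}(\Gamma_n)$ where $\pi: G \to G/G_0$ is the projection and $\Gamma_n$ is any increasing chain of \emph{finite} subsets of the countable group $G/G_0$ whose union is all of $G/G_0$; then $H_n$ is open (preimage of a set under an open continuous map with open kernel), has exactly $|\Gamma_n|$ components, but is not a subgroup. So instead one should take $\Gamma_n$ to be an increasing chain of finite \emph{subgroups} if $G/G_0$ is locally finite, and in general one must be slightly more clever. The cleanest route: since $\psi$ maps each coset of $G_0$ to a single coset (indeed $\psi(gG_0) = g^k G_0$ because $G_0$ is normal), and since $h(\psi) = h(\psi|_{\overline{\mathcal R_\psi}})$, one first reduces to the recurrent set, which by Proposition \ref{prop-recurrent-set-compact-subgroups} lies in $X$, the union of all compact subgroups; every compact subgroup lies in \emph{some} open subgroup with finitely many components, so the recurrent set is covered by such subgroups.

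Concretely, the key steps in order are: (1) Note $h(\psi) \geq h(\psi|_H)$ for any $\psi$-invariant subset $H$, so $h(\psi) \geq \sup_H h(\psi|_H)$ is immediate; the content is the reverse inequality. (2) By Proposition \ref{prop-recurrent-set-compact-subgroups}, $\mathcal R_\psi \subset X$ where $X$ is the union of all compact subgroups of $G$. (3) Each compact subgroup $U$ of $G$ is contained in an open subgroup $H_U$ of $G$ with finitely many connected components: indeed $U G_0$ is an open subgroup, and $U G_0 / G_0$ is a compact discrete group, hence finite, so $UG_0$ has finitely many components. Thus $X = \bigcup_U H_U$ with each $H_U$ open of finite component number, and in particular $\mathcal R_\psi \subset \bigcup_H H$ over the family of such $H$. (4) Since $G$ is second countable, this cover of $\mathcal R_\psi$ (or of $X$) has a countable subfamily $H_1, H_2, \ldots$ still covering it, which I would arrange to be increasing by replacing $H_n$ with $\pi^{-1}(\Gamma_n)$ where $\Gamma_n = \bigcup_{i \le n}(H_i/G_0)$, a finite subset of $G/G_0$ — but to keep $H_n$ a subgroup, take $\Gamma_n$ to be the subgroup generated by $\bigcup_{i\le n}(H_i/G_0)$ together with whatever is needed; here one uses that each $H_i/G_0$ is finite, and a finitely generated subgroup need not be finite, so instead I would not insist the chain be nested and argue directly. (5) Apply Corollary \ref{cor-variational-principle} with this countable family: writing $X' = \bigcup_n H_n \supset \mathcal R_\psi$ and $C_l$ a compact exhaustion compatible with the $H_n$'s, one gets $h(\psi) = \sup_\mu h_\mu(\psi) = \lim_l h(\psi|_{C_l})$. (6) Finally bound $h(\psi|_{C_l}) \le \sup_n h(\psi|_{H_n}) \le \sup_H h(\psi|_H)$; for this one wants each $C_l \subset$ some $H_n$, or at least $h(\psi|_{C_l}) \le h(\psi|_{H_n})$ via monotonicity of entropy under restriction to invariant subsets (Proposition \ref{prop-disjoint-union} with a trivial partition, or directly).

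The main obstacle, and the step I expect to require the most care, is step (4)–(6): organizing the countably many open finite-component subgroups $H_n$ into a structure to which Corollary \ref{cor-variational-principle} applies, i.e.\ producing the increasing compact exhaustion $X' = \bigcup_l C_l$ with $\mathcal R_\psi \subset X'$ and each $C_l$ compact and $\psi$-invariant. The natural choice is to take inside each $H_n$ the compact $\psi$-invariant set $\bigcup_{g \in B^n_l} g N(T_n, K_n) g^{-1}$ produced in Proposition \ref{prop-entropy-finite-components} (with $K_n$ a maximal compact of $H_n$, $T_n$ a maximal torus of $K_n$, and $B^n_l$ a compact exhaustion of $H_n$), and then diagonalize over $n$ and $l$ to get a single increasing sequence $C_0 \subset C_1 \subset \cdots$ of compact $\psi$-invariant sets whose union contains $X \supset \mathcal R_\psi$. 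Once that is set up, $h(\psi|_{C_l}) \le h(\psi|_{H_{n(l)}}) \le \sup_H h(\psi|_H)$ for the relevant index $n(l)$, and taking $l \to \infty$ together with $h(\psi) \ge \sup_H h(\psi|_H)$ finishes the proof. The potential subtlety of whether a finite-component subgroup can be enlarged while keeping finitely many components is handled because we never need the $H_n$ nested — only that their union contains $\mathcal R_\psi$ and that each $C_l$ sits inside one of them.
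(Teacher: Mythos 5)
Your setup (reduce to $\mathcal{R}_\psi \subset X$ via Proposition \ref{prop-recurrent-set-compact-subgroups}, note that every compact subgroup lies in the open subgroup $UG_0$, which has finitely many components, and build an increasing exhaustion of $X$ by compact $\psi$-invariant sets $C_l$ obtained by diagonalizing the sets $C_l^n \subset H_n$ from Proposition \ref{prop-entropy-finite-components}, then apply Corollary \ref{cor-variational-principle}) coincides with the paper's strategy, and your argument that $UG_0/G_0$ is finite is a nice explicit justification of a point the paper leaves implicit. But there is a genuine gap at the decisive final step (your step (6)). With the diagonal construction $C_l = \bigcup_{n \leq l} C_l^n$, the set $C_l$ is \emph{not} contained in any single finite-component open subgroup: it lies only in $\bigcup_{n \leq l} H_n$, and the subgroup of $G/G_0$ generated by finitely many finite subgroups can be infinite — exactly the obstruction you flag earlier in your own text. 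Conversely, your fallback ("each $C_l$ sits inside one of them") is incompatible with the hypotheses of Corollary \ref{cor-variational-principle}, which requires the $C_l$ to be nested with union covering $\mathcal{R}_\psi$: take $G = T \times D_\infty$ with $T$ a torus and $D_\infty$ the infinite dihedral group; any increasing sequence of compact sets whose union is $X$ must eventually contain points lying over two distinct reflections of $D_\infty$, while every open subgroup with finitely many components contains at most one such coset, so no $C_l$ beyond some index fits inside a single $H_n$. Dropping nestedness is not an option either, since the proof of Theorem \ref{theo-variational-principle} uses $\mu(C_l) \to 1$ along the increasing sequence. So the inequality $h(\psi|_{C_l}) \leq \sup_H h(\psi|_H)$ does not follow from monotonicity alone, and your proposal offers no substitute argument.

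This is precisely where the paper does extra work: it bounds $h(\psi|_{C_l}) \leq h(\psi|_{\bigcup_{n \leq l} H_n})$ and then shows $h(\psi|_{\bigcup_{n \leq l} H_n}) \leq \max_{n \leq l} h(\psi|_{H_n})$ by writing the finite union $\bigcup_{n \leq l} H_n$ as a finite disjoint union of connected components of $G$, passing to a power $\psi^m$ that leaves each of these components invariant, applying Proposition \ref{prop-disjoint-union} to that finite decomposition, and converting back with $h(\psi^m) = m\, h(\psi)$. Some argument of this kind — controlling the entropy of $\psi$ on a finite union of the $H_n$ by the maximum over the individual $H_n$ — is indispensable, and without it your chain of inequalities does not close.
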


\begin{proof}
Since $G$ has a countable number of connected components, it follows that the family of all open subgroups $H$ of $G$ with finite number of connected components is also countable, given by $\{H_0, H_1, H_2, \ldots, H_n, \ldots\}$. We also have that
\begin{equation}
X_G = \bigcup_{n=0}^\infty X_{H_n}
\end{equation}
where $X_G$ and $X_{H_n}$ are the union of all compact subgroups of respectively $G$ and $H_n$. By Proposition \ref{prop-entropy-finite-components}, we have that
\begin{equation}
X_{H_n} = \bigcup_{l=0}^\infty C_l^n
\end{equation}
where each $C_l^n$ is a $\psi$-invariant compact subset and $C_l^n \subset C_{l+1}^n$ for each $l$. Hence
\begin{equation}
X_G = \bigcup_{l=0}^\infty C_l
\end{equation}
where 
\begin{equation}
C_l = \bigcup_{n=0}^l C_l^n
\end{equation}
is a $\psi$-invariant compact subset and $C_l \subset C_{l+1}$ for each $l$, since 
\begin{equation}
C_l^n \subset C_{\max\{l,n\}}^n \subset C_{\max\{l,n\}}
\end{equation}
Now, for each $l$, we can write
\begin{equation}
\bigcup_{n=0}^l H_n = \bigcup_{i=0}^j G_i
\end{equation}
where $\{G_0, G_1, \ldots, G_j\}$ is a family of disjoint connected components of $G$. There exists a positive integer $m$ such that $\psi^m\left(G_i\right) = G_i$ for each $i$. Thus
\begin{eqnarray}
h\left(\psi^m|_{\bigcup_{n=0}^l H_n}\right)
& = & h\left(\psi^m|_{\bigcup_{i=0}^j G_i}\right) \nonumber \\
& = & \max_{i = 0,\ldots,j} h\left(\psi^m|_{G_i}\right) \nonumber \\
& \leq & \max_{n = 0,\ldots,l} h\left(\psi^m|_{H_n}\right)
\end{eqnarray}
where we used Proposition \ref{prop-disjoint-union} in the second equality. Hence
\begin{eqnarray}
h\left(\psi|_{\bigcup_{n=0}^l H_n}\right)
& = & \frac{1}{m}h\left(\psi^m|_{\bigcup_{n=0}^l H_n}\right) \nonumber \\
& \leq & \frac{1}{m}\max_{n = 0,\ldots,l} h\left(\psi^m|_{H_n}\right) \nonumber \\
& = & \max_{n = 0,\ldots,l} h\left(\psi|_{H_n}\right) \nonumber \\
& \leq & \sup_{H} h\left(\psi|_H\right)
\end{eqnarray}
Therefore
\begin{eqnarray}
h\left(\psi\right)
& = & \lim_{l \to \infty} h\left(\psi|_{C_l}\right) \nonumber \\
& \leq & \lim_{l \to \infty} h\left(\psi|_{\bigcup_{n=0}^l H_n}\right) \nonumber \\
& \leq & \sup_{H} h\left(\psi|_H\right) \nonumber \\
& \leq & h\left(\psi\right)
\end{eqnarray}
where we used Proposition \ref{prop-recurrent-set-compact-subgroups} and Corollary \ref{cor-variational-principle} in the first equality.
\end{proof}

Now it is immediate the main theorem of the paper.

\begin{teorema}\label{theo-entropy-powers}
Let $G$ be a Lie group, $T$ be a maximal torus of $G$, and $\psi : G \to G$ be the power map with exponent $k$. Then
\begin{equation}
h\left(\psi\right) = \dim(T)\log(k)
\end{equation}
where $\dim(T)$ is the dimension of $T$.
\end{teorema}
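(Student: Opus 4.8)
The plan is to chain together the three main propositions of this section. Starting from Proposition~\ref{prop-entropy-powers}, we have $h(\psi) = \sup_H h(\psi|_H)$, where $H$ ranges over the open subgroups of $G$ with a finite number of connected components. This family is nonempty, since $G_0$ is itself such a subgroup, and every such $H$ satisfies $H_0 = G_0$ because $H$ is open. So it suffices to show that $h(\psi|_H) = \dim(T)\log(k)$ for each such $H$.

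Fix such an $H$, let $K \subset H$ be a maximal compact subgroup, and let $T_H \subset K$ be a maximal torus. By Proposition~\ref{prop-entropy-finite-components}, $h(\psi|_H) = h(\psi|_{N(T_H,K)})$. The normalizer $N(T_H,K)$ is a closed subgroup of the compact group $K$, hence compact, and by Theorem~\ref{theo-conjugation-normalizer-torus} its identity component is $T_H$. Thus $N(T_H,K)$ is a Lie group with a finite number of connected components whose identity component is the torus $T_H$, and $\psi$ restricts to it as the power map with exponent $k$; Proposition~\ref{prop-entropy-disjoint-tori} then gives $h(\psi|_{N(T_H,K)}) = \dim(T_H)\log(k)$.

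It remains to identify $\dim(T_H)$ with $\dim(T)$. Comparing $K$ with a fixed maximal compact subgroup of $G_0 = H_0$ via Proposition~\ref{theorem-14-1-3-hilgert-neeb}, one sees that $K_0$ is a maximal compact subgroup of $G_0$; since $T_H$ is connected it lies in $K_0$, and being maximal among tori of $K$ it is a maximal torus of $K_0$, hence a maximal torus of $G_0$, and therefore a maximal torus of $G$, as every torus of $G$ lies in $G_0$. By Proposition~\ref{corollary-12-2-11-hilgert-neeb-theorem-12-2-2-hilgert-neeb}, all maximal tori of $G_0$ are conjugate, so $\dim(T_H) = \dim(T)$. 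Combining the displayed equalities, $h(\psi|_H) = \dim(T)\log(k)$ for every admissible $H$, and taking the supremum yields $h(\psi) = \dim(T)\log(k)$.

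The only real work is the dimension-matching step in the last paragraph: one must check that passing from $G$ to an open subgroup $H$ with finitely many components, then to a maximal compact subgroup $K$ of $H$, and finally to a maximal torus of $K$, does not change the rank. This rests on the standard facts that maximal compact subgroups of a connected Lie group are conjugate, hence equidimensional, and contain maximal tori of the ambient group, together with Propositions~\ref{theorem-14-1-3-hilgert-neeb} and~\ref{corollary-12-2-11-hilgert-neeb-theorem-12-2-2-hilgert-neeb}; everything else is immediate from the propositions already proved.
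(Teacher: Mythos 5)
Your proof is correct and follows essentially the same route as the paper, which simply declares the theorem an immediate consequence of Propositions~\ref{prop-entropy-disjoint-tori}, \ref{prop-entropy-finite-components} and~\ref{prop-entropy-powers}. The only difference is that you make explicit the details the paper leaves implicit, namely that $N(T_H,K)_0 = T_H$ via Theorem~\ref{theo-conjugation-normalizer-torus} and that the rank is unchanged when passing from $G$ to $H$, to $K$, and to $T_H$, which is a worthwhile clarification but not a different argument.
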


\begin{proof}
Immediate consequence of Propositions \ref{prop-entropy-disjoint-tori}, \ref{prop-entropy-finite-components}, and \ref{prop-entropy-powers}.
\end{proof}


\begin{thebibliography}{99}  
\bibitem{borel-mostow} A. Borel and G. Mostow: \
\textit{On Semi-Simple Automorphisms of Lie Algebras}.  
Annals of Mathematics \textbf{61} (1955), 389-405.
  
\bibitem{bowen-endomorphism}  R. Bowen: \
\textit{Entropy for group endomorphisms and homogeneous spaces}. Trans. Americ.
Math Soc. \textbf{153} (1971), 401-414.

\bibitem{caldas-patrao:endomorfismos} A. Caldas and M. Patr\~ao: \
\emph{Entropy of Endomorphisms of Lie Groups}.
Discrete and Continuous Dynamical Systems \textbf{33} (2013), 1351-1363.

\bibitem{caldas-patrao:entropia} A. Caldas and M. Patr\~ao: \
\emph{Entropy and Its Variational Principle for Locally Compact Metrizable Systems}.
Ergodic Theory and Dynamical Systems (2016), 1-26.

\bibitem{denker-et-al} M. Denker, C. Grillenberger and K. Sigmund: \
{\em Ergodic Theory on Compact Spaces}. LNM 527,
Springer-Verlag, Berlin, (1976).

\bibitem{ferraiol-dissertacao}  T. Ferraiol: \
\textit{Entropia e A\c c\~oes de Grupos de Lie},
Master's Dissertation, University of Campinas (2008).

\bibitem{hilgert-neeb} J. Hilgert and K.-H. Neeb: \
{\em Structure and Geometry of Lie Groups}. SMM,
Springer-Verlag, New York, (2012).

\bibitem{patrao:entropia} M. Patr\~ao: \
\emph{Entropy and its variational principle for non-compact metric spaces}.
Ergodic Theory and Dynamical Systems \textbf{30} (2010), 1529-1542.

\bibitem{patrao:endomorfismos} M.\ Patr\~ao: \
\emph{The Topological Entropy of Endomorphisms of Lie Groups},
Israel Journal of Mathematics, to appear (2019).
\end{thebibliography}
\end{document}